\newcommand{\R}{\mathbb{R}}
\newcommand{\Z}{\mathbb{Z}}
\newtheorem{theorem}{Theorem}
\newtheorem*{mrule}{Rule}
\newtheorem{lemma}[theorem]{Lemma}
\theoremstyle{definition}
\newtheorem{property}[theorem]{Property}
\newcommand{\conv}{\operatorname{conv}}
\newcommand{\inv}{^{-1}}
\newcommand{\CSep}{-1.5}
\newcommand{\CoxeterLabelHeight}{0.15}
\newcommand{\CoxeterCrossSize}{0.08}
\newcommand{\CoxeterNodeSize}{2.5}
\newcommand{\StyleLabel}[1]{\tiny $#1$} 
\NewDocumentCommand\drawit
\newcommand{\Circleold}[1] {   \ifthenelse{\equal{#1}{0}}{}{\node[draw, inner sep=\CoxeterNodeSize, circle,fill=white] at (#1, 0) {}; }}
\newcommand{\Circle}[1] {   \node[draw, inner sep=\CoxeterNodeSize, circle,fill=white] at (#1, 0) {}; }
\newcommand{\CirclE}[2] {   \node[draw, inner sep=\CoxeterNodeSize, circle,fill=white] at (#1, #2) {}; }
\newcommand{\Squarecross}[1] {
    \node[draw, fill=white, inner sep=\CoxeterNodeSize, rectangle] at (#1, 0) {};
    \draw (#1-\CoxeterCrossSize, -\CoxeterCrossSize) -- (#1+\CoxeterCrossSize, \CoxeterCrossSize); \draw (#1-\CoxeterCrossSize, \CoxeterCrossSize) -- (#1+\CoxeterCrossSize, -\CoxeterCrossSize);
}
\newcommand{\Squarecrossed}[2] {
    \node[draw, fill=white, inner sep=\CoxeterNodeSize, rectangle] at (#1, #2) {};
    \draw (#1-\CoxeterCrossSize, #2-\CoxeterCrossSize) -- (#1+\CoxeterCrossSize, #2+\CoxeterCrossSize); \draw (#1-\CoxeterCrossSize, #2+\CoxeterCrossSize) -- (#1+\CoxeterCrossSize, #2-\CoxeterCrossSize);
}
\newcommand{\Square}[1] { \ifthenelse{\equal{#1}{0}}{}{\node[draw, fill=white, inner sep=\CoxeterNodeSize, rectangle] at (#1, 0) {};} }
\newcommand{\SquarE}[2] { \node[draw, fill=white, inner sep=\CoxeterNodeSize, rectangle] at (#1, #2) {}; }
\NewDocumentCommand{\GetElement}{m m}
{
    \clist_gclear_new:c {g_Coxeter_clist}
    \clist_gset:cn {g_Coxeter_clist} {#1}
    \clist_item:cn {g_Coxeter_clist} {#2}
}
\title{Classifying Regular Polyhedra and Polytopes using Wythoff's Construction}
\author{Spencer Whitehead}
\begin{document}
\maketitle
\begin{abstract}
Wythoff's construction associates a uniform polytope to a Coxeter diagram whose vertices are decorated with crosses, which indicate the subgroup stabilizing a generic point.
Champagne, Kjiri, Patera, and Sharp remarked in \cite{Champagne1995} that by associating more information to a Coxeter diagram, one can furthermore determine the types and number of faces of such a polytope.
This article provides a proof of the result in \cite{Champagne1995}, and uses it to provide a classification of the regular polytopes.
\end{abstract}

\section{Introduction}

A Wythoffian polytope is a uniform polytope generated as the convex hull of the orbit of a point in a finite group generated by reflections.
Wythoffian polytopes are interesting because they comprise a large number of uniform polytopes, and also are easy to work with due to their particularly simple method of construction.
It was remarked in \cite{Champagne1995} that one may use the Coxeter diagram associated to a Wythoffian polytope to answer questions about a Wythoffian polytope such as the number and kind of faces it possesses.
This method allows one to quickly determine what a Wythoffian polytope ``looks like'' in space, given its depiction as a decorated Coxeter diagram.
As we see in Section 6, it also enables one to easily enumerate the subset of Wythoffian polytopes meeting some additional combinatorial property; for example, having all its $2$-faces be equal.

The purpose of this paper is to provide a proof of the remarks in \cite{Champagne1995}, to use them to fully describe the abstract structure of a Wythoffian polytope as a face lattice whose elements are pairs of symmetries and decorated Coxeter diagrams, and then finally to provide a classification of the regular polytopes in this setting.
As a consequence, we also see a complete list of ways the regular polytopes arise from Wythoff constructions (for example, the 24-cell can be constructed out of kaleidoscopes of shape $B_4, D_4$, and $F_4$).

Similar enumerations of regular polytopes (\cite[\S 7]{Coxeter-polytopes}, for example) and how they arise as Wythoff constructions are well known, and Coxeter's ``simplical subdivision'' is a method of determining the symmetry group of a regular polytope not dissimilar to that of Theorem~\ref{theorem:regularwythoff}.
Rather than inductively building the entire symmetry group, the approach in this paper is to show that the symmetry conditions on a regular polytope force the symmetry group to contain enough reflections to generate it, which in turn means that regular polytopes are Wythoffian.
The method in the present paper is a demonstration of how these data associated to a uniform polytope may be used to recover structural information in a regular way, and could concievably be used to tackle similar problems; for example, listing the Wythoffian polytopes with certain kinds of faces or transitivity properties.

Section 2 introduces basic definitions relating to polytopes and regularity, as well as the abstract properties of polytopes as lattices that will be required.
Section 3 describes Wythoff's construction.
Section 4 describes how regular polytopes may be realized as Wythoffian polytopes.
Section 5 develops the basic theory of decorated Coxeter diagrams used in \cite{Champagne1995} with proof, and uses the resulting combinatorial information to describe the face lattice structure of a Wythoffian polytope.
Section 6 uses the results of the previous two sections to classify the regular polytopes.

\subsection*{Acknowledgements}

The author would like to thank Benoit Charbonneau for many productive discussions about this paper, and to two anonymous referees whose suggestions were greatly helpful in improving the quality of the work.

\section{Definitions}

We follow \cite{Ziegler1995} in providing basic definitions surrounding polytopes.
A \emph{(convex) polytope} is the convex hull of a finite set of points in $\R^n$, and it is said to be \emph{degenerate} if it is contained in a hyperplane; when not otherwise specified, we assume that polytopes are non-degenerate.
When $c \in \R^n, c_0 \in \R$, an inequality $c^T x \le c_0$ is said to be \emph{valid} for a polytope $P$ if each point in $P$ satisfies it.
A \emph{face} of $P$ is a set of the form $P \cap \{x \in \R^n : c^T x = c_0\}$, where $c^T x \le c_0$ is valid for $P$.
The \emph{dimension} of a face is the dimension of its affine hull.
A \emph{$k$-face} is a $k$-dimensional face; typical names are \emph{vertex} for a $0$-face, \emph{edge} for a $1$-face, \emph{ridge} for a $(n-2)$-face, and \emph{facet} for a $(n-1)$-face.

A \emph{(complete) flag} in $P$ is a chain of inclusions $F_0 \subseteq F_1 \subseteq \cdots \subseteq F_n$, where $F_i$ is an $i$-dimensional face of $P$.
In a polytope, any partial flag can be completed to a flag of this form.
Two flags are said to be \emph{adjacent} if they differ in exactly one position.

A \emph{regular polytope} is a polytope whose group of symmetries acts transitively on its flags.
In three dimensions, the regular polytopes are the familiar platonic solids: the tetrahedron, cube, octahedron, dodecahedron, and icosahedron.

To a polytope $P$ we associate a polar dual $P^* := \{ c \in \R^n : c^T x \le 1 \text{ for all } x \in P\}$.
Recall that if $0 \in P$ then $P = (P^*)^*$.
When $P$ is a regular polytope, one can show that the vertices of $P^*$ are the centroids of the facets of $P$ (or an appropriate scaling of them).

All polytopes satisfy the following two properties (\cite[Chap. 1]{Abstract-Regular-Polytopes}):
\begin{property}[Diamond property]
\label{property:diamond}
Given a $(k-1)$-face $F_{k-1}$ and a $(k+1)$-face $F_{k+1}$ of a polytope, there exist exactly two distinct $k$-faces contained in $F_{k+1}$ and containing $F_{k-1}$.
\end{property}

\begin{property}[Flag-connectedness]
\label{property:strongflag}
Given two flags $F, F'$ of a polytope, there exists a sequence of flags $F = F_1, F_2, \ldots, F_{n-1}, F_n = F'$ having $F_i$ adjacent to $F_{i+1}$ for all $i$.
\end{property}

The diamond property says, for example, that each edge contains two vertices, and that each vertex is incident to exactly two edges of every face it is contained in.
A consequence of the diamond property is that each flag is adjacent to exactly $n$ other flags: if  $n-1$ elements of in the flag are fixed, the only choice for the unfixed element is the other element in the relevant diamond.
See Fig.~\ref{fig:propertyex} for an example of both these properties.

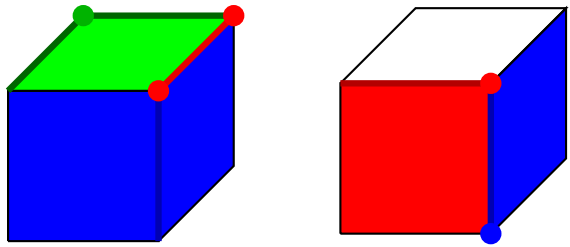
\begin{figure}
\centering
\scalebox{2}{
\begin{tikzpicture}
\draw[fill=blue] (0,0) -- (1,0) -- (1,1) -- (0,1) -- (0,0);
\draw[fill=blue] (1,0) -- (1,1) -- (1.5,1.5) -- (1.5,0.5) -- (1,0);
\draw[fill=green] (0,1) -- (1,1) -- (1.5,1.5) -- (0.5,1.5) -- (0,1);
\draw[color=black!30!blue,very thick] (1,0) -- (1,1);
\draw[color=black!60!green,very thick] (0.5,1.5) -- (0,1);
\draw[color=black!60!green,very thick] (0.5,1.5) -- (1.5,1.5);
\node [draw, inner sep=1pt, circle, fill=black!30!green, color=black!30!green,very thick] at (0.5,1.5) {};
\draw[color=black!10!red,very thick] (1,1) -- (1.5,1.5);
\node [draw, inner sep=1pt, circle, fill=red, color=red,very thick] at (1.5,1.5) {};
\node [draw, inner sep=1pt, circle, fill=red, color=red,very thick] at (1,1) {};
\end{tikzpicture}
\hskip0.5cm
\begin{tikzpicture}
\draw[fill=red] (0,0) -- (1,0) -- (1,1) -- (0,1) -- (0,0);
\draw[fill=blue] (1,0) -- (1,1) -- (1.5,1.5) -- (1.5,0.5) -- (1,0);
\draw (0,1) -- (0.5, 1.5) -- (1.5,1.5) -- (1,1);
\draw[color=black!30!red,very thick] (0,1) -- (1,1);
\draw[color=black!30!blue,very thick] (1,0) -- (1,1);
\node [draw, inner sep=1pt, circle, fill=red, color=red,very thick] at (1,1) {};
\node [draw, inner sep=1pt, circle, fill=blue, color=blue,very thick] at (1,0) {};
\end{tikzpicture}
}
\caption{The left image demonstrates the diamond property. With $k=0$ (red), each edge has two vertices. With $k=1$ (green), each vertex is incident to two edges in a face. With $k=2$ (blue), each edge borders two faces.
The right shows two flags in a cube: to get from one to the other, first change the blue vertex to the red vertex, then the blue face to the red face, then the blue edge to the red edge.}
\label{fig:propertyex}
\end{figure}

\section{Wythoff's Construction}
\label{sec:wythoff}

A \emph{uniform polytope} is a relaxation of the symmetry conditions on a regular polytope from flag-transitivity to vertex-transitivity.
In two dimensions, the uniform polygons are defined to be exactly the regular polygons.
In higher dimensions, a polytope is said to be uniform if all its facets are uniform, and if its group of symmetries acts transitively on its vertices.
Given a finite reflection-generated subgroup of $O(n)$ and a point in $\R^n$, we consider the convex polytope whose vertices are the orbit of the point under the group.
Coxeter showed in \cite{Coxeter-WythoffConstruction} that a judicious choice of point results in a uniform polytope---the uniform polytopes arising from this construction are called \emph{Wythoffian}.
Almost all known uniform polytopes arise from the kaleidoscopic construction of Wythoff and Coxeter: Conway and Guy showed in \cite{Conway-Guy-Four-Dimensional-Archimedean} that of the 63 four-dimensional uniform polytopes not occurring in infinite families, only three are not Wythoffian.

\begin{figure}
\centering
\scalebox{0.7}{
\begin{tikzpicture}[every node/.style={inner sep=\CoxeterNodeSize, fill=white}]
\node at (-1,0) {\small $A_n$};
\draw (0,0) -- (2.5,0);
\draw (3.5,0) -- (4,0);
\SquarE{0}{0}
\SquarE{1}{0}
\SquarE{2}{0}
\SquarE{4}{0}
\node at (3,0) {\scalebox{0.5}{$\cdots$}};

\node at (-1, \CSep) {\small $B_n$};
\node at (0.5, \CSep+\CoxeterLabelHeight) {\StyleLabel{4}};
\draw (0,\CSep) -- (2.5,\CSep);
\draw (3.5,\CSep) -- (4,\CSep);
\SquarE{0}{\CSep}
\SquarE{1}{\CSep}
\SquarE{2}{\CSep}
\SquarE{4}{\CSep}
\node at (3,\CSep) {\scalebox{0.5}{$\cdots$}};

\node at (-1, 3*\CSep) {\small $I_2(k)$};
\node at (0.5, 3*\CSep+\CoxeterLabelHeight) {\StyleLabel{k}};
\draw (0, 3*\CSep) -- (1, 3*\CSep);
\SquarE{0}{3*\CSep}
\SquarE{1}{3*\CSep}

\node at (-1, 2*\CSep) {\small $D_n$};
\draw (0,2*\CSep) -- (2.5,2*\CSep);
\draw (3, 2*\CSep) -- (3, 2.5*\CSep);
\draw (2.5,2*\CSep) -- (4,2*\CSep);
\SquarE{0}{2*\CSep}
\SquarE{1}{2*\CSep}
\SquarE{3}{2*\CSep}
\SquarE{4}{2*\CSep}
\SquarE{3}{2.5*\CSep}
\node at (2,2*\CSep) {\scalebox{0.5}{$\cdots$}};

\node at (6, 0) {\small $H_3$};
\node at (7.5, \CoxeterLabelHeight) {\StyleLabel{5}};
\draw (7,0) -- (9, 0);
\SquarE{7}{0}
\SquarE{8}{0}
\SquarE{9}{0}

\node at (6, \CSep) {\small $H_4$};
\node at (7.5, \CSep+\CoxeterLabelHeight) {\StyleLabel{5}};
\draw (7,\CSep) -- (10, \CSep);
\SquarE{7}{\CSep}
\SquarE{8}{\CSep}
\SquarE{9}{\CSep}
\SquarE{10}{\CSep}

\node at (6, 2*\CSep) {\small $F_4$};
\node at (8.5, 2*\CSep+\CoxeterLabelHeight) {\StyleLabel{4}};
\draw (7,2*\CSep) -- (10, 2*\CSep);
\SquarE{7}{2*\CSep}
\SquarE{8}{2*\CSep}
\SquarE{9}{2*\CSep}
\SquarE{10}{2*\CSep}

\node at (12, 0) {\small $E_6$};
\draw (13, 0) -- (17, 0);
\draw (15, 0) -- (15, 0.5*\CSep);
\SquarE{13}{0}
\SquarE{14}{0}
\SquarE{15}{0}
\SquarE{16}{0}
\SquarE{17}{0}
\SquarE{15}{0.5*\CSep}

\node at (12, \CSep) {\small $E_7$};
\draw (13, \CSep) -- (18, \CSep);
\draw (16, \CSep) -- (16, 1.5*\CSep);
\SquarE{13}{\CSep}
\SquarE{14}{\CSep}
\SquarE{15}{\CSep}
\SquarE{16}{\CSep}
\SquarE{17}{\CSep}
\SquarE{18}{\CSep}
\SquarE{16}{1.5*\CSep}

\node at (12, 2*\CSep) {\small $E_8$};
\draw (13, 2*\CSep) -- (19, 2*\CSep);
\draw (17, 2*\CSep) -- (17, 2.5*\CSep);
\SquarE{13}{2*\CSep}
\SquarE{14}{2*\CSep}
\SquarE{15}{2*\CSep}
\SquarE{16}{2*\CSep}
\SquarE{17}{2*\CSep}
\SquarE{18}{2*\CSep}
\SquarE{19}{2*\CSep}
\SquarE{17}{2.5*\CSep}
\end{tikzpicture}
}
\caption{A list of the irreducible Coxeter diagrams}
\label{fig:coxeterdiagram}
\end{figure}
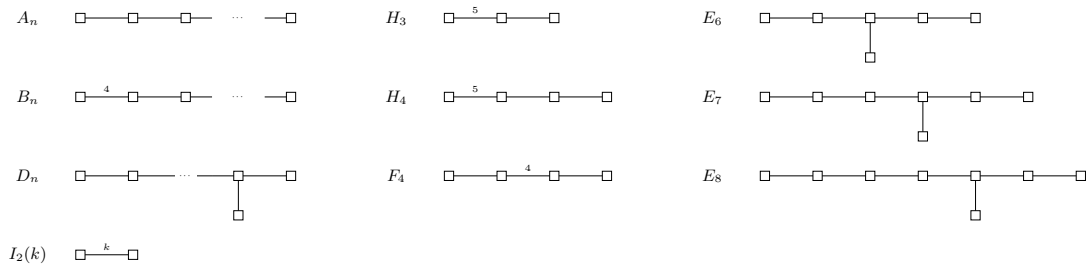
An abstract finite group generated by involutions is called a \emph{Coxeter group}.
Coxeter showed in \cite{Coxeter-DiscreteGroupsGeneratedByReflections} that these groups have presentations of the form
\[ \left\langle r_1, \ldots, r_n | (r_i r_j)^{m_{ij}} = 1, i, j = 1, \ldots n \right\rangle \]
for some symmetric positive integer matrix $(m_{ij})$ with $m_{ii} = 1$ for all $i$.
Such a group can be modelled in $O(n)$ by associating to each $r_i$ a reflection through a hyperplane in $\R^n$ containing the origin.
To ensure the condition $(r_i r_j)^{m_{ij}} = 1$, the hyperplanes associated to $r_i$ and $r_j$ should meet at dihedral angle $\frac{\pi}{m_{ij}}$.
This information is encoded into a \emph{Coxeter diagram} consisting of a vertex for every mirror, and an edge between mirrors $i, j$ with label $m_{ij}$.
Conventionally, if $m_{ij} \le 2$ no edge is added, and if $m_{ij} = 3$ the label is omitted.
Coxeter's classification showed that the Coxeter diagram of any Coxeter group is a disjoint union of the diagrams in Fig.~\ref{fig:coxeterdiagram}

To pick a point for Wythoff's construction, select some subset $S$ of these mirrors to be a stabilizer, and choose the unique point of unit length that lies on every mirror of $S$, and is of equal non-zero distance from every mirror not in $S$.
The resulting polytope is uniform, and moreover, all points resulting in uniform polytopes for this group arise from this procedure.
Two examples of decorated Coxeter diagrams and the polytopes they represent are given in Fig.~\ref{fig:coxeterexample}; the convention that mirrors in $S$ are drawn with a cross through them is used.
If $S$ contains a full connected component of the Coxeter diagram, any mirror in that component fixes the whole polytope, and thus the polytope is contained within a hyperplane; conversely, a Wythoffian polytope is contained in a hyperplane only if its Coxeter diagram has a connected component contained in $S$.

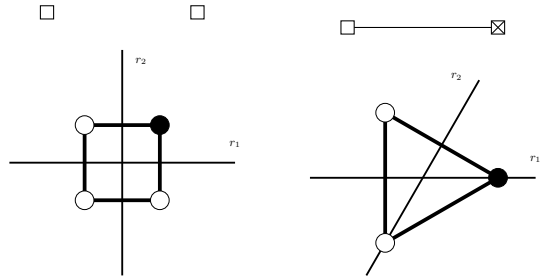
\begin{figure}
\centering
\scalebox{0.5}{
\begin{tikzpicture}
\scalebox{2}{
\SquarE{-1}{2}
\SquarE{1}{2}
}
\draw[line width=0.5mm] (3,0) -- (-3, 0);
\draw[line width=0.5mm] (0,3) -- (0, -3);
\draw[line width=1mm] (1,1) -- (-1,1) -- (-1,-1) -- (1,-1) -- (1,1);
\node[circle,draw,fill=black,inner sep=5pt] at (1, 1){};
\node[circle,draw,fill=white,inner sep=5pt] at (-1,1){};
\node[circle,draw,fill=white,inner sep=5pt] at (-1,-1){};
\node[circle,draw,fill=white,inner sep=5pt] at (1,-1){};
\node at (3,0.5) {\small $r_1$};
\node at (0.5,2.7) {\small $r_2$};
\end{tikzpicture}
}
\hspace{0.5cm}
\scalebox{0.5}{
\begin{tikzpicture}
\scalebox{2}{
\draw (-1,2) -- (1,2);
\SquarE{-1}{2}
\Squarecrossed{1}{2}
}
\draw[line width=0.5mm] (-3,0) -- (3,0);
\draw[line width=0.5mm] (-1.5,-3*0.866) -- (3*0.5,3*0.866);
\draw[line width=1mm](2,0) -- (-1, -2*0.866);
\draw[line width=1mm] (2,0) -- (-1, 2*0.866);
\draw[line width=1mm] (-1, 2*0.866) -- (-1, -2*0.866);
\node[circle,draw,fill=black,inner sep=5pt] at (2, 0){};
\node[circle,draw,fill=white,inner sep=5pt] at (-1,-2*0.866){};
\node[circle,draw,fill=white,inner sep=5pt] at (-1,2*0.866){};
\node at (3,0.5) {\small $r_1$};
\node at (0.9,2.7) {\small $r_2$};
\end{tikzpicture}
}
\caption{Two examples of decorated Coxeter diagrams and the polytopes they represent---the left is a square, and the right is an equilateral triangle. The thick lines are the reflecting mirrors, and the white points are reflections of the black point in the mirrors.}
\label{fig:coxeterexample}
\end{figure}

\section{Regular polytopes are Wythoffian}
\label{sec:regularwythoff}

\begin{theorem}
\label{theorem:regularwythoff}
Let $P \subseteq \R^n$ be a regular polytope.
Then $P$ is Wythoffian.
\end{theorem}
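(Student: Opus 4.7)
The plan is to show that the symmetry group $G$ of $P$ is itself a finite reflection group with $n$ distinguished generators, and that the base vertex of a chosen flag is a legitimate Wythoff seed. First I would translate $P$ so its centroid lies at the origin, making every element of $G$ a linear isometry. Fix a base flag $F_0 \subset F_1 \subset \cdots \subset F_n = P$ and let $c_j$ denote the centroid of $F_j$. A short induction on $j$, using that $c_j$ lies in $\operatorname{aff}(F_j)$ but not in $\operatorname{aff}(F_{j-1})$, shows that $c_0, c_1, \ldots, c_n$ are affinely independent in $\R^n$. Any isometry of $\R^n$ fixing $n+1$ affinely independent points is the identity, so the stabilizer of the base flag in $G$ is trivial; combined with flag-transitivity, this gives a simply transitive action of $G$ on flags.

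Consequently, for each $i \in \{0, 1, \ldots, n-1\}$ there is a unique $r_i \in G$ carrying the base flag to its $i$-adjacent flag (whose existence and uniqueness come from Property~\ref{property:diamond}). Since $r_i^2$ fixes the base flag, $r_i$ is an involution, and since $r_i$ fixes $F_j$ setwise for $j \neq i$, it fixes the $n$ affinely independent centroids $c_j$ with $j \neq i$, which span a hyperplane $H_i$. The main obstacle is upgrading $r_i$ from an arbitrary involution to an actual reflection: here one uses that $r_i$ restricts to the identity on $H_i$ but moves $c_i$ to the centroid of a different $i$-face, so among all isometries of $\R^n$ the only possibility is the reflection through $H_i$.

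Next, let $W = \langle r_0, \ldots, r_{n-1} \rangle$. To show $W = G$, I would establish that $W$ already acts transitively on flags and then invoke simple transitivity of $G$. Given any flag $F'$, Property~\ref{property:strongflag} produces a chain of adjacent flags from the base flag to $F'$; an induction along the chain, of the form ``if $\Phi = w(F)$ with $w \in W$ then the $j$-adjacent flag of $\Phi$ equals $w r_j (F)$,'' places every flag in the $W$-orbit of $F$. Combining with simple transitivity of $G$ forces $G = W$.

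Finally, the base vertex $c_0 = F_0$ lies on each hyperplane $H_i$ with $i \neq 0$ (it is one of the spanning centroids) but by the affine independence noted at the start it does not lie on $H_0 = \operatorname{aff}(c_1, \ldots, c_n)$. Thus $c_0$ satisfies the conditions of Wythoff's construction for the stabilizer subset $S = \{r_1, \ldots, r_{n-1}\}$; the equidistance requirement from mirrors outside $S$ is vacuous because $r_0$ is the only non-stabilized mirror. Since the $G$-orbit of $c_0$ equals the vertex set of $P$ by vertex-transitivity and $P$ is the convex hull of its vertices, $P = \operatorname{conv}(G \cdot c_0)$ is Wythoffian.
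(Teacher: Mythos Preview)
Your argument is correct and is the classical ``distinguished generators'' route: fix a base flag, produce one reflection $r_i$ per flag position, show they generate the full symmetry group via flag-connectedness, and observe that the base vertex is a Wythoff seed with stabilizer $\{r_1,\ldots,r_{n-1}\}$. The paper instead proceeds indirectly: it produces a reflection $s_R$ for every \emph{ridge} $R$ (using flag-transitivity to swap the two facets meeting at $R$), shows that the group $W=\langle s_R\rangle$ acts transitively on \emph{facets}, takes the orbit of a facet centroid to obtain the dual $P^*$ as a Wythoffian polytope, and then applies double-duality. So the paper never identifies the full symmetry group as a Coxeter system and never exhibits $P$ itself directly as an orbit; it trades that for a shorter list of reflections (one per ridge, all of the same combinatorial type) and an appeal to the regularity of the dual. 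Your approach buys more structural information---you get $G$ as an explicit rank-$n$ reflection group and the specific decoration (a single uncrossed node)---at the cost of the centroid affine-independence lemma and the simple-transitivity bookkeeping. The paper's approach is more economical in what it needs to prove about the symmetry group, but pays for it with the detour through $P^*$; it is also, as the introduction notes, deliberately chosen to contrast with Coxeter's simplicial-subdivision method, which is essentially what you have rediscovered.
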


\begin{proof}
To the polytope $P$ we may associate its \emph{centroid}, obtained by averaging its vertices.
A symmetry of the polytope is an affine isometry that permutes the vertices, and hence fixes the centroid.
Assume without loss of generality that $P$ is centered at the origin, so symmetries of $P$ are orthogonal transformations.

As previously mentioned, assume $P$ is non-degenerate; if it were degenerate, it is non-degenerate in its affine hull, where the below proof applies.
If some facet contains the origin, then $P$ is contained in a half-space of the form $\{ x \in \R^n : a^T x \ge 0\}$.
Since $P$ is not contained in the hyperplane $a^T x = 0$, for each vertex $v$ we have $a^T v \ge 0$, and for at least one vertex, $a^T v > 0$.
Then if $c$ is the center of $P$, we find that $a^T c > 0$.
But $c$ is the origin, so $a^T c = 0$.
This is a contradiction; thus no facet contains the origin.
Since each lower dimensional face is contained in a facet, it follows that no face of any dimension contains the origin.

Let $R$ be a ridge of $P$.
Then $R$ is contained in an affine subspace of dimension $n-2$, and does not contain the origin, so it has linear span of dimension $n-1$.
Denote by $H$ the unique hyperplane containing the origin spanned by $R$.
By Property~\ref{property:diamond}, $R$ is contained in exactly two facets, say $F$ and $F'$.
By flag transitivity, there exists a symmetry $s_R$ of $P$ fixing $R$ and sending $F$ to $F'$.
Since $s_R$ fixes $R$, it fixes $H$ by linearity.
Because $s_R$ is an isometry and is not the identity, it is uniquely determined to be reflection in $H$.

Let $W$ be the subgroup of the symmetries of $P$ generated by all the $s_R$.
Since $P$ has finite symmetry group, $W$ is finite also.
By Property~\ref{property:strongflag}, given any two facets $F, F'$ there is a sequence of adjacent flags $\Phi_1, \ldots, \Phi_n$ such that $\Phi_1$ has $F$ as a facet, and $\Phi_n$ has $F'$ as a facet.
Write $F_i$ for the facet of $\Phi_i$, and $R_i$ for the ridge of $\Phi_i$.
When $F_i \neq F_{i+1}$, adjacency of $\Phi_i$ and $\Phi_{i+1}$ guarantees that $R_i = R_{i+1}$, so that $F_{i+1} = s_{R_i} F_i$.
If $F_i = F_{i+1}$, let $w_i = 1$.
Otherwise, let $w_i = s_{R_i}$.
Then $w_n \cdots w_1 F = F'$, so $W$ acts transitively on the facets of $P$.

Pick a facet $F$ of $P$.
Since $F$ is also a polytope of one lower dimension, we may pick the center $x$ of $F$, and consider the set $Q = \conv(Wx)$.
As $W$ is finite, $Q$ is a polytope.
In fact, $Q$ is the convex hull of the centers of the facets of $P$, and is therefore dual to $P$, since $P$ is regular.
Since the dual of a regular polytope is regular, $Q$ is regular, and in particular, uniform.
Since $Q$ is a uniform polytope generated by taking the orbit of a point under a finite reflection group, it is Wythoffian.

What we have shown is that when $P$ is regular, the dual of $P$ is Wythoffian.
Consequently, if $P$ is a regular polytope containing the origin, then $P^*$ is regular and so $(P^*)^* = P$ is Wythoffian.
\end{proof}

\section{Decorated Coxeter Diagrams}

The purpose of this section is to provide a rigorous statement and proof of the theory of decorated Coxeter diagrams first developed in \cite{Champagne1995}.
These results may also be extended to explicitly describe the entire face lattice structure of a Wythoffian polytope, which is the content of Theorem~\ref{thm:bigthm}.

Let $G$ be a Coxeter group with Coxeter diagram $D$, and let $S$ be any set.
Denote the vertices of $D$ by $V(D)$.
An \emph{$S$-decorated Coxeter diagram} (or just a \emph{decoration} or \emph{decorated Coxeter diagram} when $S$ is understood) is a map $f : V(D) \to S$. We will often think of the vertices of $D$ as being elements of $G$, or, when a representation of $G$ is fixed, the planes that they are reflections through or their normal vectors.

Using the language of decorated Coxeter diagrams, Wythoff's construction can be restated.

\begin{theorem}
Let $G \subseteq O(n)$ be a discrete group generated by reflections with normals $v_1, \ldots, v_n$ and with Coxeter diagram $D$, and let $f : V(D) \to \{0,1\}$ be a decorated Coxeter diagram.
There is a unit vector $x \in \R^n$ that lies on the planes $v_i^T y = 0$ where $f(i) = 0$ and has equal non-zero distance from the planes $v_i^T y = 0$ where $f(i) = 1$.
Then the convex hull $P=\conv(Gx)$ of the $G$-orbit of $x$ is a uniform polytope.
The symmetries of $P$ in $G$ are the group generated by $v_i$ where $f(v_i) = 0$.
\end{theorem}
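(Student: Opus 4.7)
The statement asserts three things: existence (and implicit uniqueness) of the point $x$, uniformity of $P$, and identification of the stabilizer of $x$ in $G$ (which I take to be the intended meaning of ``the symmetries of $P$ in $G$''). For existence of $x$, I would argue by linear algebra: a finite subgroup of $O(n)$ generated by $n$ reflections whose mirrors meet only at the origin must have linearly independent normals $v_1, \ldots, v_n$, since otherwise their common orthogonal complement would be a nonzero fixed subspace of $G$. Hence for any constant $c > 0$ the inhomogeneous linear system consisting of $v_i^T y = 0$ for $i$ with $f(v_i) = 0$ together with $v_j^T y = c$ for $j$ with $f(v_j) = 1$ has a unique solution, and one can scale $c$ to ensure $\|y\| = 1$.

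To show $P = \conv(Gx)$ is uniform, I would induct on $n$. The vertices of $P$ are precisely the orbit $Gx$: the points of $Gx$ all lie at distance $1$ from the origin, so none can be a non-extremal convex combination of the others. Since $G$ acts transitively on $Gx$ by construction, $P$ is vertex-transitive. For facets, the key claim is that, up to the $G$-action, the facets are in bijection with the vertices $v_i$ of $D$ with $f(v_i) = 1$, and the corresponding facet is $\conv(G_{\hat{\imath}} x)$ for the maximal parabolic $G_{\hat{\imath}} = \langle v_j : j \ne i \rangle$. Deleting an $i$ with $f(v_i) = 0$ produces no new facet, since such a $v_i$ already fixes $x$ and leaves the orbit unchanged. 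As $G_{\hat{\imath}}$ is a Coxeter group of rank $n - 1$ (its diagram is the subdiagram of $D$ obtained by removing $v_i$) and the restricted decoration makes $\conv(G_{\hat{\imath}} x)$ its own Wythoffian polytope in the facet's affine span, each facet is uniform by the inductive hypothesis.

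For the stabilizer claim, the containment $\langle v_i : f(v_i) = 0 \rangle \subseteq \operatorname{Stab}_G(x)$ is immediate from the defining conditions on $x$. The reverse containment follows from a classical fact about finite reflection groups: if $C$ is the fundamental chamber of $G$ and $x$ lies in its closure, then $\operatorname{Stab}_G(x)$ is generated by the wall-reflections $v_i$ whose wall contains $x$. In our setting these are precisely the $v_i$ with $f(v_i) = 0$.

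The main obstacle is the facet-classification step in the uniformity argument. It is straightforward to verify that each $\conv(G_{\hat{\imath}} x)$ for a crossed vertex $v_i$ lies in a supporting hyperplane of $P$, but showing that no other supporting hyperplane carves out a facet requires careful analysis of the chamber geometry of $G$ and the position of $x$ inside the closed fundamental chamber. This is essentially the technical content developed in Section~5 via decorated Coxeter diagrams, which systematizes exactly this kind of face-by-face bookkeeping, and is the natural framework in which to close the argument cleanly.
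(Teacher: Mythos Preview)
The paper does not actually prove this theorem: it is offered as a restatement, in the language of decorated diagrams, of Wythoff's construction as described in Section~\ref{sec:wythoff} and attributed to Coxeter. The result is treated as classical input, so there is no argument in the paper to compare your proposal against.

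Your existence argument for $x$ and your stabilizer argument are correct and standard. However, your facet-classification step contains a genuine error, not merely an incomplete one. You claim that, up to the $G$-action, the facets are indexed by the vertices $v_i$ with $f(v_i)=1$, the corresponding facet being $\conv(G_{\hat\imath}x)$ for $G_{\hat\imath}=\langle v_j : j\neq i\rangle$. Take $D=A_3$ with the simplex decoration $f(v_1)=1$, $f(v_2)=f(v_3)=0$: your recipe yields $\conv(\langle v_2,v_3\rangle x)=\{x\}$, a single point. The correct picture---which is what Section~5 establishes via rule~$(\star)$---is that facets containing $x$ correspond to vertices $v$ whose \emph{deletion} leaves no connected component of $D$ labelled identically~$0$ by $f$; the facet is then $\conv(\langle v_j:j\neq v\rangle\, x)$. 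In the simplex example the only admissible deletion is $v_3$, giving the triangle $\conv(\langle v_1,v_2\rangle x)$. For the truncated cube $f=(0,1,1)$ on $B_3$, the admissible deletions are $v_1$ and $v_3$ (octagon and triangle, as in Fig.~\ref{fig:ckpsexample}), not the set $\{v_2,v_3\}=f^{-1}(1)$. Your supporting remark that ``deleting an $i$ with $f(v_i)=0$ leaves the orbit unchanged'' is also false: that $v_i$ fixes $x$ does not imply $G_{\hat\imath}x=Gx$.
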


By associating more detailed information to a Coxeter diagram, it is possible to obtain stronger information about the structure of a Wythoffian polytope.
Let $f : V(D) \to \{0,1\}$ be a decorated Coxeter diagram, giving a Wythoffian polytope $P$.
We can also think of $f$ as a $\{0,1,2\}$-decorated Coxeter diagram.
For such diagrams, we define the following transformation rule $(\star)$, introduced in \cite{Champagne1995}:

\begin{mrule}[Rule $(\star)$]
Given a $\{0,1,2\}$-decorated Coxeter diagram $f$ and an element $w \in V(D)$ such that $f(w) = 1$, define a new $\{0,1,2\}$-decorated Coxeter diagram $f'$ in the following manner: Set $f'(v) = 2$ exactly when $f(v) = 2$ or $v = w$, and set $f'(v) = 1$ exactly when $f(v) = 1$ and $v \neq w$, or when $f(v) = 0$ and $v, w$ are adjacent in $D$.
\end{mrule}

One can think of a $\{0,1,2\}$-decorated Coxeter diagram as a graph where vertices with a zero decoration are drawn as crossed-out boxes, vertices with a one decoration are drawn as boxes, and vertices with a two decoration are drawn as circles.
The rule $(\star)$ then says that a $\{0,1,2\}$-decorated Coxeter diagram may be transformed by selecting a box, replacing it with a circle, and removing the crosses from any adjacent box.

The significance of decorations obtained by iterated applications of this rule is that they fully classify the faces of a Wythoffian polytope: after $k$ applications of the rule to the original decoration $f$ giving a decoration $f'$, let $S = (f')\inv(2)$, and then $f|_S : V(D|_S) \to \{0,1\}$ gives a polytope by Wythoff's construction applied to the subgroup $G|_S$ of $G$ generated by the $v_i$ contained in $S$. This polytope, which we denote $P(f,f')$, will be a $k$-dimensional face of the original polytope $P$. Moreover, every $k$-face of $P$ arises in this way as some sequence of $k$ applications of the rule $(\star)$.

For the rest of the section, fix $G \subseteq O(n)$ to be a Coxeter group with diagram $D$, and let $f$ be a decoration of $D$ yielding a uniform polytope $P$ via Wythoff's construction.
Let $x$ be the point selected in the kaleidoscope for Wythoff's construction.
Furthermore, $k$ will be some integer between $1$ and $n$, $f'$ will be a decoration of $D$ obtained by applying $k-1$ times the rule $(\star)$ to $f$, and $f''$ the decoration of $D$ obtained by applying $(\star)$ one further time to $f'$ on the vertex $v \in G$ of $D$.
For any decoration obtained from $f$ by applying the transformation rule some number of times, we think of $P(f,g)$ as being a subset of $P$, obtained by taking the convex hull of the orbit of $x$ under the group $G|_S$, where $S = g\inv(2)$.

\begin{lemma}
\label{lem:rightdim}
The polytope $P(f,f'')$ is contained in a $k$-dimensional affine hyperplane.
\end{lemma}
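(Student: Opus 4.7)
The plan is to show that the orbit $G|_S \cdot x$ defining $P(f, f'')$ actually lies in an affine translate of a subspace of dimension at most $k$, by exploiting the fact that reflections act trivially on the orthogonal complement of their normal.

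First I would track the size of $S := (f'')^{-1}(2)$ across the $k$ applications of Rule $(\star)$. The initial decoration $f$ takes values in $\{0,1\}$, so $f^{-1}(2) = \emptyset$. Each application of Rule $(\star)$ promotes exactly one vertex $w$ (the chosen vertex, with $f(w) = 1$) to decoration $2$, while vertices already labelled $2$ keep that label. Hence after the $k$-th application, $|(f'')^{-1}(2)| = k$, i.e.\ $|S| = k$.

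Next, let $V_S := \operatorname{span}\{v_i : i \in S\} \subseteq \R^n$. Then $\dim V_S \le |S| = k$. Every generating reflection of $G|_S$ has its mirror equal to $v_i^\perp$ for some $i \in S$, and each such mirror contains $V_S^\perp$. Therefore every element of $G|_S$ restricts to the identity on $V_S^\perp$. Consequently, for each $g \in G|_S$, the orthogonal projection of $gx$ onto $V_S^\perp$ agrees with that of $x$, so $gx \in x + V_S$. Taking convex hulls gives
\[
P(f,f'') \;=\; \operatorname{conv}(G|_S \cdot x) \;\subseteq\; x + V_S,
\]
an affine subspace of dimension at most $k$, which can be enlarged to one of dimension exactly $k$ if desired.

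There is no real obstruction here: the entire argument is the combinatorial bookkeeping that $|S|$ increases by one per application of Rule $(\star)$, together with the standard observation that a group generated by reflections fixes pointwise the orthogonal complement of the span of their normals. The subtler point, that the dimension of the hyperplane is \emph{exactly} $k$ (and that $P(f,f'')$ is actually a $k$-face of $P$), is presumably addressed in subsequent results; it would require showing that the $v_i$ for $i \in S$ are linearly independent and that the decoration $f|_S$ produces a non-degenerate Wythoffian polytope inside $V_S$, which is stronger than what the present lemma asserts.
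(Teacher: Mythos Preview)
Your proposal is correct and follows essentially the same approach as the paper: both arguments rest on the observation that each application of $(\star)$ increases $|(f'')^{-1}(2)|$ by exactly one, so $|S|=k$. The paper's proof is a single sentence appealing to ``Wythoff's construction'' for the dimensional conclusion, whereas you unpack that step explicitly by showing $G|_S$ fixes $V_S^\perp$ pointwise and hence the orbit lies in $x+V_S$; this is just a more detailed rendering of the same idea, not a different route.
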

\begin{proof}
After $k$ applications of $(\star)$, the decoration $f''$ must have $|(f'')\inv(2)| = k$, so the resulting polytope is $k$-dimensional by Wythoff's construction.
\end{proof}

\begin{lemma}
The polytope $P(f,f'')$ is a $k$-face of $P$.
\end{lemma}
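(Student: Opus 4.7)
I will exhibit an explicit supporting hyperplane of $P$ whose intersection with $P$ equals $P(f,f'')$, hence $P(f,f'')$ is a $k$-face.

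Let $P_{S''} \subseteq \R^n$ denote the subspace pointwise fixed by $G|_{S''}$, of codimension $k$ by standard reflection-group theory, and let $y$ be the orthogonal projection of $x$ onto $P_{S''}$. Equivalently, $y = \frac{1}{|G|_{S''}|}\sum_{g \in G|_{S''}} gx$ is the centroid of $P(f,f'')$, so the affine hull of $P(f,f'')$ is $y + P_{S''}^\perp$, a $k$-dimensional affine subspace by Lemma~\ref{lem:rightdim}. Consider the hyperplane $H = \{z : y^T z = \|y\|^2\}$. That $P(f,f'') \subseteq H$ is a direct computation: for $g \in G|_{S''}$, using $g^{-1}y = y$ and $x - y \in P_{S''}^\perp$, one has $y^T(gx) = (g^{-1}y)^T x = y^T x = \|y\|^2$.

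The core of the argument is to show $H$ supports $P$ and cuts out exactly $P(f,f'')$. Since $P = \conv(Gx)$, I need $y^T(gx) \leq \|y\|^2$ for all $g \in G$, with equality only when $gx \in G|_{S''} \cdot x$. Rewriting $y^T(gx) = \langle g^{-1}y, x\rangle$, the goal is that $x$ maximizes $\langle \,\cdot\,, x\rangle$ on the orbit $Gy$. I plan to prove this by a descent argument: both $x$ and $y$ lie in the closed fundamental chamber $\overline F$ of $G$ (the containment $y \in \overline F$ follows because orthogonal projection onto a face of $\overline F$ preserves $\overline F$, which is a consequence of the simple-root Gram matrix having non-positive off-diagonal entries). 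For $z = g^{-1}y \notin \overline F$, pick a wall $H_v$ with $n_v^T z < 0$; since $n_v^T x \geq 0$, we have $\langle r_v z, x\rangle - \langle z, x\rangle = -2(n_v^T z)(n_v^T x) \geq 0$. Iterating this drives $z$ into $\overline F$, where by uniqueness of representative $z = y$; hence $\langle g^{-1}y, x\rangle \leq \langle y, x\rangle = \|y\|^2$.

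I expect the equality case to be the main obstacle because the stabilizer of $y$ may be strictly larger than $G|_{S''}$. Equality at every descent step forces each reflecting mirror $v$ to satisfy $n_v^T x = 0$, i.e., $f(v) = 0$, so these reflections lie in $\mathrm{Stab}_G(x)$. Thus equality implies $g \in \mathrm{Stab}_G(y) \cdot \mathrm{Stab}_G(x)$. A direct check of which $v \notin S''$ have $y \in H_v$---carried out by decomposing $n_v$ along $P_{S''}$ and $P_{S''}^\perp$ and using the Wythoff distance conditions---shows that any such $v$ must have $f(v) = 0$, so $r_v \in \mathrm{Stab}_G(x)$ already. Consequently $\mathrm{Stab}_G(y) \cdot x = G|_{S''} \cdot x$, and the vertex set of $P \cap H$ is precisely that of $P(f,f'')$, identifying the two.
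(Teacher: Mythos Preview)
Your approach is sound and genuinely different from the paper's. The paper argues by downward induction on $k$: it first shows that decorations with a single uncircled vertex give \emph{facets} of $P$ (using that the centroid of such a $P(f,f')$ lies along the fundamental-weight ray at the corresponding vertex of the Weyl chamber, so no vertex of $P$ can lie beyond it), and then recurses, using that facets of Wythoffian polytopes are again Wythoffian so the same reasoning applies one dimension down. Your argument instead produces, for every $k$ at once, an explicit supporting hyperplane $y^{T}z=\|y\|^{2}$ via the centroid $y=\pi_{P_{S''}}(x)$ together with the standard closest-orbit-point descent in the fundamental chamber. This is more uniform and self-contained; the paper's inductive route is shorter to state but leans harder on geometric intuition in its base case.

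One point in your equality analysis needs strengthening. From $y\in H_v$ with $v\notin S''$ you deduce $f(v)=0$, hence $r_v\in\mathrm{Stab}_G(x)$, and then assert $\mathrm{Stab}_G(y)\cdot x=G|_{S''}\cdot x$. Knowing only that the extra generators $T$ of $\mathrm{Stab}_G(y)=\langle S''\cup T\rangle$ fix $x$ is not sufficient: if some $t\in T$ fails to commute with $S''$, the parabolic $\langle S''\cup T\rangle$ can have a strictly larger $x$-orbit than $G|_{S''}$. What you need is the stronger conclusion $f''(v)=0$, i.e.\ $v$ is also non-adjacent to $S''$. This does fall out of the same computation: writing $\langle\alpha_v,y\rangle=\langle\alpha_v,x\rangle-\sum_{j\in S''}c_j\langle\alpha_v,\alpha_j\rangle$, the Stieltjes property of the Gram submatrix and the fact (forced by the rule $(\star)$) that every connected component of $S''$ contains a vertex with $f=1$ give $c_j>0$ for all $j\in S''$; vanishing then forces $\langle\alpha_v,\alpha_j\rangle=0$ for every $j\in S''$. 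With that in hand, $\mathrm{Stab}_G(y)=G|_{S''}\times G|_{T}$ and your conclusion follows.
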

\begin{proof}
It is easiest to do this proof starting at $k = n$ and proceeding backwards; by considering what a decoration must look like before $(\star)$ is applied for the $n$th time, we see that it corresponds exactly to the decorations $f'$ where $f'(v) = 1$ for some vertex $v$ and $f'(w) = 2$ for each other $w$.
The goal is to prove that $P(f,f')$ is a top-dimensional face of $P$.
To this end, note that the centroid of $P(f,f')$ lies on the line through the origin and the vertex $W$ of the Weyl chamber corresponding to the uncircled element of the Coxeter diagram.
Therefore it cannot be the case that $P(f,f')$ separates a vertex from the rest of the polytope; such a vertex would have to be in a Weyl chamber incident to $W$, and the vertices of those chambers are already known to be in $P(f,f')$.

Since $P(f,f'')$ is Wythoffian, its facets are uniform, and we may continue to apply this result inductively to realize any $P(f,f''')$ as a face of $P$.
\end{proof}

Next, we will investigate which elements of $G$ send $P(f,f'')$ to itself: they are exactly the elements that can be expressed as a product of generators $v_i$ satisfying $f''(v_i) \neq 1$.
First, if $f''(v_i) = 2$, then it is clear that $v_i$ is a symmetry of $P(f,f'')$.
Also, if $f''(v_i) = 0$, then $v_i x = x$, and as well $(\star)$ guarantees that $v_i$ commutes with the subgroup of $G$ that generates $P(f,f'')$.
In particular, any diagram obtained from applying $(\star)$ always satisfies that generators with decoration 2 and generators with decoration 0 are not adjacent, and hence commute.
It follows that $v_i$ fixes $P(f,f'')$.

It remains to show that any $v_i$ with $f''(v_i) = 1$ is not a symmetry of $P(f,f'')$.
This is a consequence of Lemma~\ref{lem:rightdim}; applying the transformation rule to $v_i$ must increase the affine dimension of $P(f,f'')$ and so it cannot be the case that $v_i P(f,f'') = P(f,f'')$.
In conclusion, we have seen that
\begin{lemma}
The elements $v \in G$ that have $vP(f,f'') = P(f,f'')$ are exactly the elements of the subgroup generated by $v_i \in V(D)$ where $f''(v_i) \neq 1$.
\end{lemma}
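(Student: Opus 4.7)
The plan is to prove both containments in the claimed equality $\text{Stab}_G(P(f,f'')) = H$, where $H := \langle v_i : f''(v_i) \neq 1 \rangle$, by a case analysis on the decoration value of each generator, essentially formalizing the argument laid out in the surrounding paragraphs.

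For the inclusion $H \subseteq \text{Stab}_G(P(f,f''))$, I would check one generator at a time. If $f''(v_i) = 2$, then $v_i \in G|_S$ for $S = (f'')^{-1}(2)$, and since $P(f,f'')$ is by definition the convex hull of the $G|_S$-orbit of $x$, the generator $v_i$ permutes this orbit and hence fixes its convex hull. If $f''(v_i) = 0$, two facts are needed: that $v_i x = x$, and that $v_i$ commutes with every generator of $G|_S$. The first follows by induction on the number of applications of $(\star)$: the rule never lowers a decoration, so $f''(v_i) = 0$ forces $f(v_i) = 0$, whence $v_i$ belongs to the original Wythoff stabilizer of $x$. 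The second follows by a separate induction showing that no $0$-decorated vertex can ever be adjacent in $D$ to a $2$-decorated vertex; indeed, whenever $(\star)$ promotes some $w$ to decoration $2$, every $0$-decorated neighbor of $w$ is simultaneously promoted to $1$. Adjacency of $v_i$ to $v_j \in G|_S$ would therefore force $(v_i v_j)^2 = 1$ to fail, which it cannot, so the two commute. Combined, these give $v_i P(f,f'') = \conv(G|_S\, v_i x) = \conv(G|_S x) = P(f,f'')$.

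For the reverse inclusion $\text{Stab}_G(P(f,f'')) \subseteq H$, the core step is to rule out generators $v_i$ with $f''(v_i) = 1$, which Lemma~\ref{lem:rightdim} supplies. Let $f'''$ be obtained by applying $(\star)$ to $f''$ at $v_i$, and let $T = S \cup \{v_i\}$. Then $P(f,f''')$ has affine dimension $k+1$, so the orbit $G|_T x$ spans a $(k+1)$-dimensional affine subspace. If $v_i$ preserved $P(f,f'')$ setwise, then $G|_T = \langle G|_S, v_i\rangle$ would preserve the $k$-dimensional affine hull of $P(f,f'')$, which contains $x$; but this would force $G|_T x$ into that $k$-dimensional subspace, a contradiction. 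To promote this to the full group statement, I would use that the vertex set of $P(f,f'')$ equals $Hx$ (since $0$-decorated generators fix $x$, so enlarging $G|_S$ to $H$ does not change the orbit of $x$): any $g \in \text{Stab}_G(P(f,f''))$ must send $x$ to some $hx$ with $h \in H$, so $h^{-1}g$ lies in $\text{Stab}_G(x) \cap \text{Stab}_G(P(f,f''))$ and the problem reduces to showing this intersection lies in $H$.

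The main obstacle I anticipate is closing this last reduction cleanly, since one must rule out that a product of Coxeter generators including some $v_i$ with $f''(v_i) = 1$ could preserve $P(f,f'')$ through cancellations in $G$. The cleanest remedy is to observe that $H$ is a standard parabolic subgroup of $G$ (being generated by a subset of the Coxeter generators), and invoke the standard fact that the stabilizer in a finite reflection group of the convex hull of a parabolic orbit of a generic point coincides with the parabolic subgroup itself. A secondary, more routine obstacle is maintaining the adjacency invariant between $0$- and $2$-decorations across all iterated applications of $(\star)$, which needs a careful induction across the entire sequence of rule applications rather than just a single step.
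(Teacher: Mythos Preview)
Your proposal follows the same route as the paper: both check generators case-by-case for the inclusion $H \subseteq \operatorname{Stab}_G(P(f,f''))$ (using the invariant that $0$- and $2$-decorated vertices are never adjacent after iterated applications of $(\star)$), and both invoke the dimension jump of Lemma~\ref{lem:rightdim} to show that a single generator $v_i$ with $f''(v_i)=1$ cannot stabilize $P(f,f'')$. The paper in fact stops at that point and declares the lemma proved; you are right to flag that excluding the individual generators does not by itself exclude arbitrary products, and your proposed remedy---reducing to the stabilizer of $x$ and then appealing to the standard fact that point stabilizers in finite reflection groups are parabolic---is the natural way to close that gap, so your argument is if anything more careful than the paper's own.
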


Denote this group by $G(f'')$.
Using this, we may for example compute the number of faces of $P$ isomorphic to $P(f,f'')$ using the orbit-stabilizer theorem (and keeping in mind that two different decorations could potentially yield isomorphic faces).

\begin{lemma}
Every $k$-face of $P$ containing $x$ arises as $P(f,f'')$ for some $f''$.
\end{lemma}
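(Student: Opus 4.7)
The plan is to proceed by induction on $k$. For the base case $k = 0$, the only $0$-face of $P$ containing $x$ is $\{x\}$, and this equals $P(f, f)$: taking $f'' = f$ (zero applications of $(\star)$), we have $(f'')\inv(2) = \emptyset$, so $G|_\emptyset$ is trivial and $\conv(\{x\}) = \{x\}$.

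For the inductive step, let $F$ be a $k$-face of $P$ containing $x$ with $k \geq 1$. Since $F$ is itself a polytope having $x$ as a vertex, I pick a facet $F'$ of $F$ containing $x$; this $F'$ is a $(k-1)$-face of $P$, since facets of faces of $P$ are themselves faces of $P$ of one lower dimension. By the inductive hypothesis, $F' = P(f, f')$ for some decoration $f'$ obtained from $f$ by $k-1$ applications of $(\star)$; write $T' = (f')\inv(2)$.

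The crux is to exhibit a vertex $v \in V(D)$ with $f'(v) = 1$ such that applying $(\star)$ at $v$ produces a decoration $f''$ satisfying $P(f, f'') = F$. I first select an edge $e$ of $F$ emanating from $x$ that is not contained in $F'$; such an edge exists, since otherwise every edge of $F$ at $x$ would lie in $F'$, forcing the tangent cone of $F$ at $x$ to lie in the affine hull of $F'$ and contradicting $\dim F > \dim F'$. Appealing to the edge structure of Wythoff's construction---namely, that each edge of $P$ emanating from $x$ has the form $[x, vx]$ for some $v \in V(D)$ with $f(v) = 1$---I obtain the desired $v$. Then $v \notin T'$: if $v \in T'$ then $v$ would belong to the stabilizer $G(f')$ of $F'$ from the prior lemma, forcing $vx \in F'$ and contradicting $e \not\subset F'$. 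Tracking how decorations propagate through iterated applications of $(\star)$ yields $f'(v) = 1$ (since $f(v) = 1$ and $v \notin T'$), so $(\star)$ applies validly at $v$, producing $f''$ with $(f'')\inv(2) = T' \cup \{v\}$.

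To conclude $P(f, f'') = F$, I observe that both are $k$-faces of $P$ containing $F' = P(f, f')$ and the vertex $vx$. Their affine hulls therefore both coincide with the affine span of $F' \cup \{vx\}$, which is $k$-dimensional because $vx \notin \mathrm{aff}(F')$ (otherwise $vx \in F' = P \cap \mathrm{aff}(F')$). Since any face of a polytope equals the intersection of the polytope with its affine hull, both $F$ and $P(f, f'')$ equal $P$ intersected with this common affine subspace, and hence $F = P(f, f'')$. The main obstacle is the edge-identification step---the assertion that every edge of $P$ at $x$ is of the form $[x, vx]$ for a single generator $v \in V(D)$. This is essentially the $k = 1$ case of the lemma and follows from the geometry of the fundamental chamber of $G$; more broadly, one may need to first translate $F$ by an element of $G_x = \langle v_i : f(v_i) = 0 \rangle$ to bring the chosen edge into the standard form in which a single generator suffices.
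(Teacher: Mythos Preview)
Your proof is correct and follows essentially the same route as the paper's. Both arguments pick a $(k-1)$-face $F'$ of $F$ through $x$, realize it as $P(f,f')$ by the inductive hypothesis, locate an edge $[x,vx]$ of $F$ not lying in $F'$, verify $f'(v)=1$, apply $(\star)$ at $v$, and conclude $F = P(f,f'')$ because two $k$-faces sharing a $k$-dimensional affine hull must coincide. The only cosmetic differences are that the paper phrases the induction as a minimal-counterexample argument and deduces $f'(v)\neq 0$ from $vx\neq x$ rather than from your decoration-propagation observation; your final caveat about translating by $G_x$ is unnecessary, since the edge description $[x,vx]$ with $v$ a generator already holds for every edge at $x$.
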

\begin{proof}
Suppose not, and select $k$ to be as small as possible such that there is a face $F$ containing $x$ which is not equal to $P(f,f'')$ for any decoration $f''$.
One may show that the edges containing $x$ all take the form $\{x,vx\}$ where $v$ is a generating reflection, and so $k \ge 2$; also, we may take $k < n$, because the decoration $f'' \equiv 2$ identically recovers $P$.
Let $F'$ be a $(k-1)$-face of $F$ containing $x$, and choose a decoration $f'$ so that $F = P(f,f')$.
Since $k < n$, there is an edge of $F$ that contains $x$ but is not contained in $F'$.
Write this edge as $\{x, vx\}$ where $v \in G$ is a generating reflection.
Note that since $vx \neq x$, $f'(v) \neq 0$, and since $vx \not\in F'$, $f'(v) \neq 2$, so $f'(v) = 1$.

Then if $G'$ is the subgroup of $G$ generating $F'$ (that is, generated by $v \in V(D)$ with $f'(v) = 2$) we saw that the decoration $f''$ obtained by applying $(\star)$ to $f'$ and $v$ gives a face $F'' = P(f,f'')$ that contains $F'$ and $vx$ and has affine dimension $k$.
But $F$ also satisfies these properties, and $P$ is convex and non-degenerate, so an affine $k$-plane cannot contain two distinct $k$-faces.
Thus $F = F''$ arises as $P(f,f'')$.
\end{proof}

These results allow one to completely describe the combinatorial structure of the vertex figure of a uniform polytope, because they completely describe the incidence information of faces containing the vertex $x$.
The next step is to use vertex transitivity and information about the stabilizer to stitch together these vertex figures into a single lattice for the entire polytope.

Supposing that $P(f,f'')$ is some $k$-face of the polytope containing $x$, for an element $g \in G$, then $gP(f,f'')$ will be a $k$-face containing $gx$.
Moreover, since $P$ is vertex transitive, this allows the identification of the vertex figure structure at $gx$ with  that at $x$ acted upon by $g$.
Now it is clear that the same decoration based at $x$ and $gx$ correspond to an identical face of the polytope if and only if $g$ is a symmetry of the face.
This also preserves the incidence relation: it follows from the previous lemmas that $P(f,f'')$ has $P(f,f')$ as a $(k-1)$-face if and only if $f''$ may be obtained by applying $(\star)$ to $f'$ and some generator.

In short, we have seen the following description of the combinatorial structure of a Wythoffian polytope, which may be viewed as a reformulation of the result of \cite{Champagne1995}:

\begin{theorem}
\label{thm:bigthm}
Let $P$ be a Wythoffian polytope corresponding to a decoration $f : V(D) \to \{0,1\}$, where $D$ is a Coxeter diagram corresponding to a Coxeter group $G$.
The face lattice of $P$ admits the following description: the rank $k$ elements are pairs $(g, f')$ where $f'$ is a decorated Coxeter diagram obtained by applying the rule $(\star)$ to $f$ a total of $k$ times, and $g$ is an element of the coset space $G/G(f')$.

The incidence structure is defined by the relation that $(g, f')$ contains $(g,f'')$ if and only if $f''$ is obtained from $f'$ by one application of the rule $(\star)$.
\end{theorem}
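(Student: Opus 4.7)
The plan is to assemble the preceding lemmas into a global description of the face lattice of $P$. Those lemmas already supply a complete picture of the faces containing the base vertex $x$: every $k$-face through $x$ has the form $P(f,f')$ for some decoration $f'$ obtained from $f$ by $k$ applications of the rule $(\star)$, and the $G$-stabilizer of $P(f,f')$ is the subgroup $G(f')$. What remains is to promote this local description at $x$ to the full face lattice via vertex-transitivity of the $G$-action, and to verify the stated incidence relation.

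First I would establish surjectivity of the parameterization. Given any $k$-face $F$ of $P$, choose a vertex $y$ of $F$; by vertex-transitivity $y = gx$ for some $g \in G$, so $g^{-1}F$ is a $k$-face containing $x$, and by the preceding lemma $g^{-1}F = P(f,f')$ for some $f'$ arising from $k$ applications of $(\star)$. Hence $F = gP(f,f')$, which shows every $k$-face arises in the form $(g,f')$. Next I would record the parameterization by cosets, which is just orbit-stabilizer: since $G$ acts on the faces of $P$, we have $g_1 P(f,f') = g_2 P(f,f')$ if and only if $g_2^{-1}g_1$ lies in the stabilizer of $P(f,f')$, and the stabilizer lemma identifies this with $G(f')$. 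Consequently the $k$-faces of ``type'' $f'$ are in bijection with $G/G(f')$.

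For the incidence relation, one direction is immediate from the definition of $(\star)$: a single application of the rule enlarges the active subgroup $G|_{(f')^{-1}(2)}$ by one generator, so $P(f,f') \subseteq P(f,f'')$ by construction, and translating by any $g \in G$ preserves this containment. The converse direction, that every cover relation in the lattice arises this way, follows by translating a cover back to $x$ by a suitable element of $G$ and invoking the classification of covers in the vertex figure at $x$ that came out of the preceding lemmas.

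The step I expect to require the most care is confirming that the incidence relation as stated, using a single label $g$ on both sides, really captures every cover in the lattice, despite the coset spaces $G/G(f')$ and $G/G(f'')$ being taken with respect to different subgroups. Concretely, one must check that whenever $gP(f,f')$ is a subface of some $k{+}1$-face $F$, then $F$ can be written as $gP(f,f'')$ for the same $g$ and an appropriate $f''$. This reduces to the combinatorial observation that the subgroups $G(f')$ and $G(f'')$ are compatibly related under the operation of $(\star)$: any ambiguity in the choice of $g$ on the two levels can be absorbed into $G(f'')$, which can be verified by tracking explicitly how the rule $(\star)$ changes which generators have decoration $0$ or $2$.
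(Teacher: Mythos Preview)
Your proposal is correct and follows essentially the same route as the paper: the paper likewise derives the theorem directly from the preceding lemmas by using vertex-transitivity to transport the local description at $x$ to every vertex, invoking the stabilizer lemma for the coset parameterization, and noting that incidence is preserved under the $G$-action. If anything, your treatment is more careful than the paper's, which does not explicitly address the point you raise in your final paragraph about choosing a common representative $g$ across the two coset spaces; your remedy (pick $g$ so that $gx$ lies in the smaller face, hence in both) is the right one.
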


One can also think of applying the rule $(\star)$ ``backwards'', by deleting vertices from a decorated Coxeter diagram. The theorem then says that $(n-k)$-faces are obtained by deleting $k$ vertices from a $\{0,1\}$-decorated Coxeter diagram, in such a way that there is no connected component on which the decoration is identically zero.

Fig~\ref{fig:ckpsexample} gives an example of the rule $(\star)$ being applied to determine faces of a polyhedron.
Here the convention $0 = $ cross, $1 = $ box, $2 = $ circle is used.

\begin{figure}
\centering

\begin{tikzpicture}
\node at (0,0.5) {\includegraphics[width=3cm]{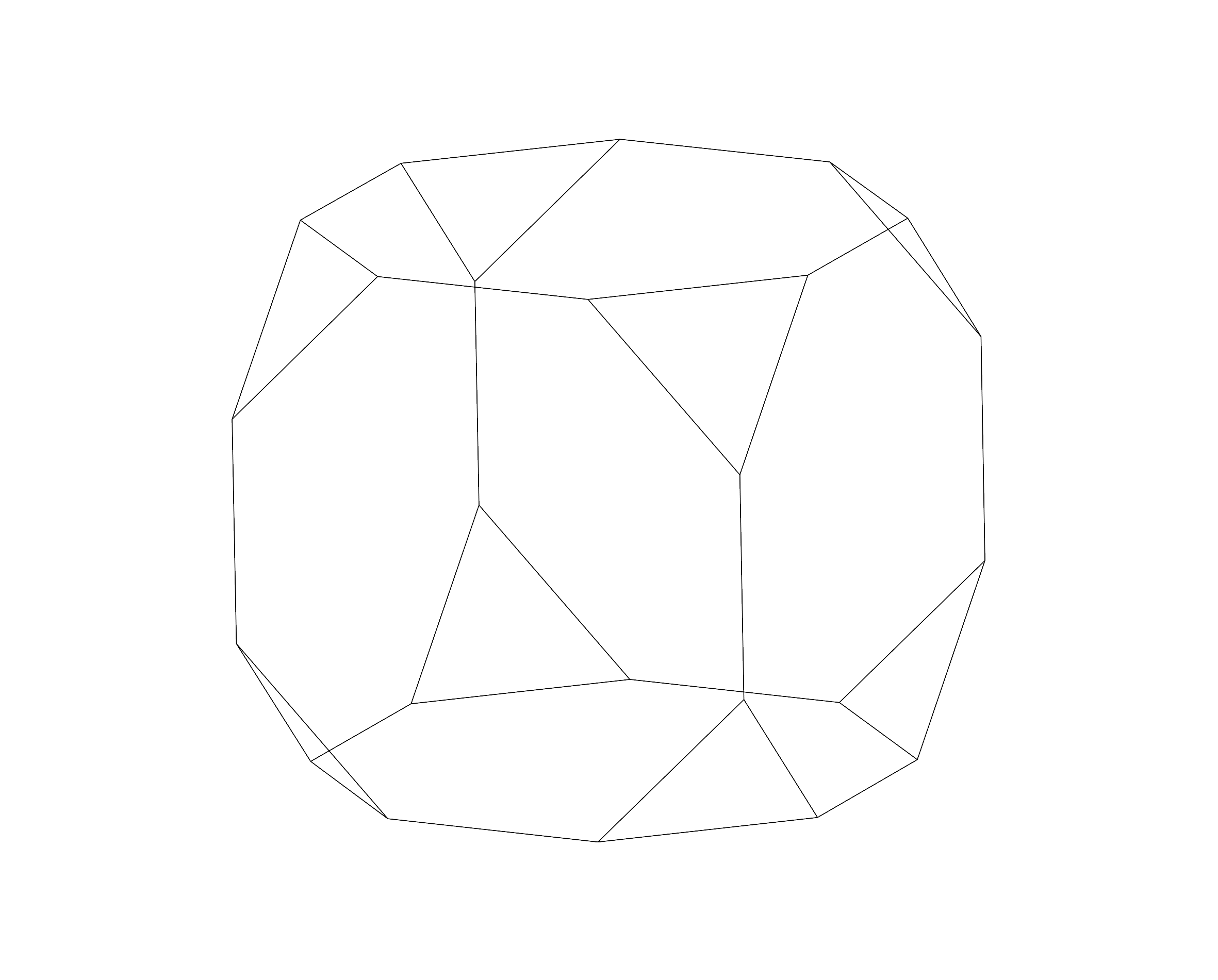}};
\draw (-1,-1) -- (1,-1);
\Squarecrossed{-1}{-1}
\SquarE{0}{-1}
\SquarE{1}{-1}
\node at (0.5, -0.8) {\tiny 4};

\node at (3,0.5) {\includegraphics[width=3cm]{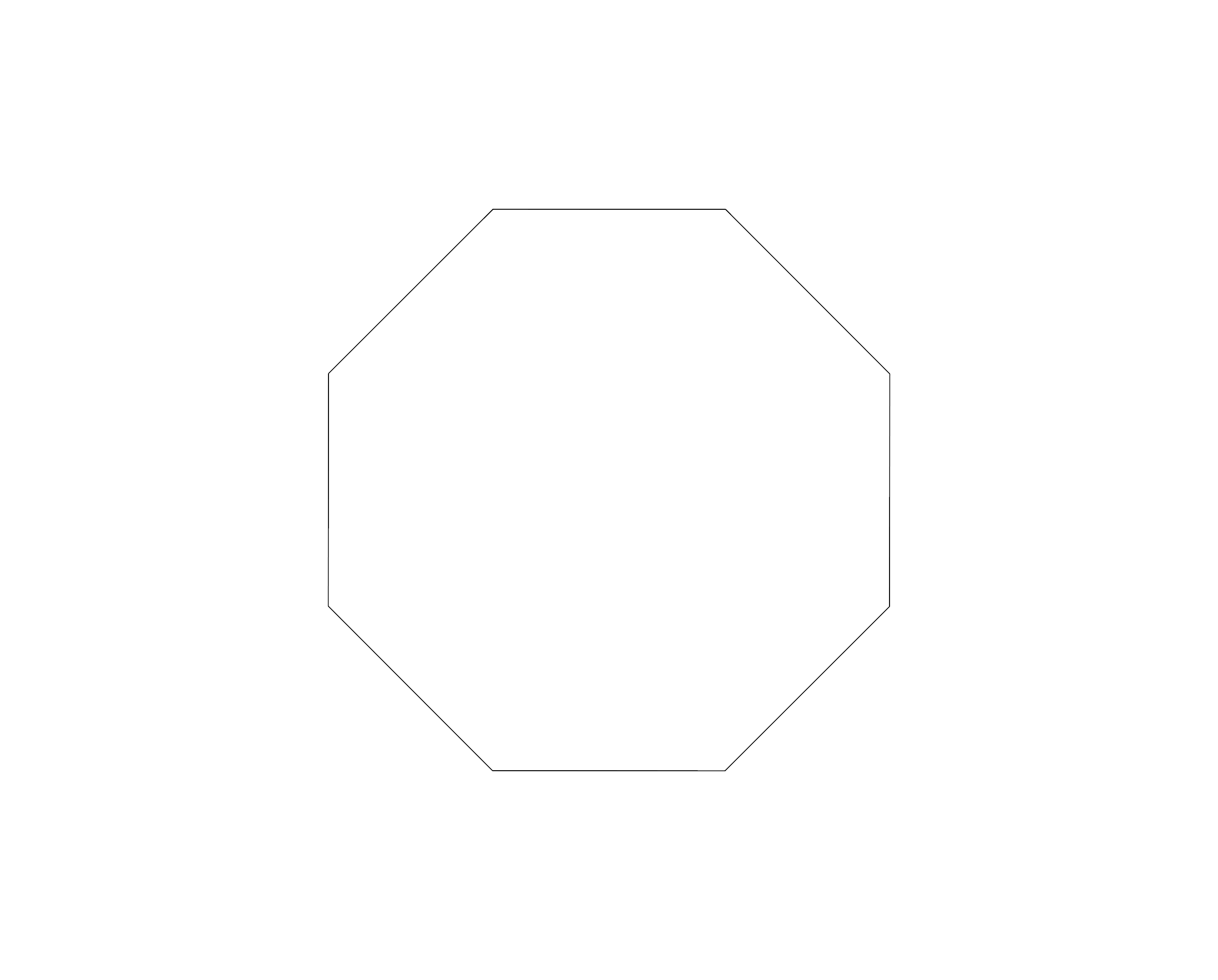}};
\draw (2,-1) -- (4,-1);
\draw (3,-2) -- (4,-2);
\draw [->](3,-1.3) -- (3,-1.7);
\SquarE{2}{-1}
\CirclE{3}{-1}
\CirclE{4}{-1}
\SquarE{3}{-2}
\SquarE{4}{-2}
\node at (3.5, -0.8) {\tiny 4};
\node at (3.5, -1.8) {\tiny 4};

\node at (6,0.5) {\includegraphics[width=3cm]{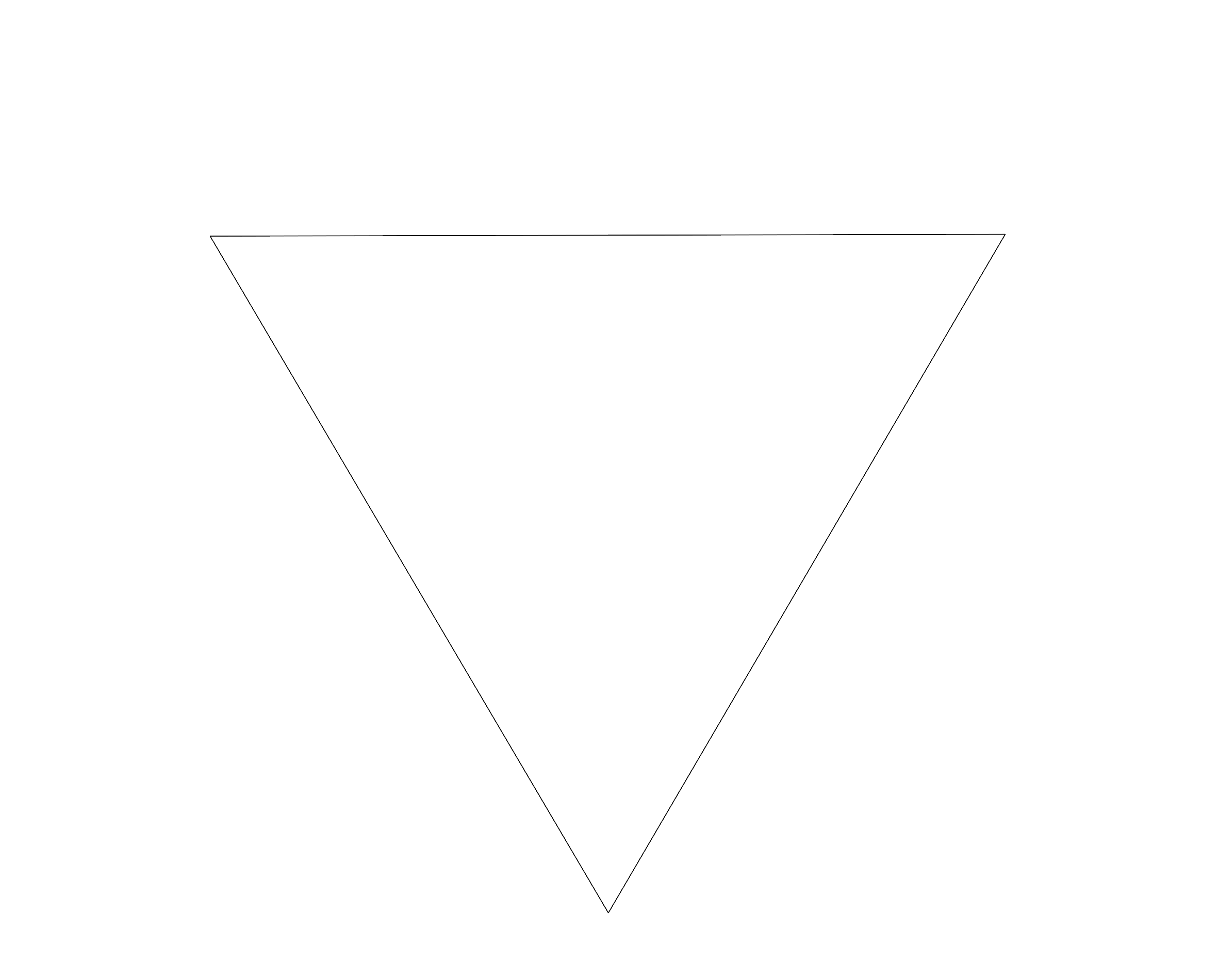}};
\draw (5,-1) -- (7,-1);
\draw (5, -2) -- (6,-2);
\draw [->](6,-1.3) -- (6,-1.7);
\CirclE{5}{-1}
\CirclE{6}{-1}
\SquarE{7}{-1}
\Squarecrossed{5}{-2}
\SquarE{6}{-2}
\node at (6.5, -0.8) {\tiny 4};

\end{tikzpicture}
\caption{Two decorations of \drawit{3}{$ $,4}{2,3}{0} giving $2$-faces are shown, corresponding to an octagon and a triangle. The triangle decoration is obtained by first applying $(\star)$ to the middle vertex. The leftmost vertex is then uncrossed by the transformation rule, so it may have $(\star)$ applied to it next.}
\label{fig:ckpsexample}
\end{figure}

\section{Classifying the regular polytopes}

Fig~\ref{fig:ckpsexample} provides some insight as to how the classification of regular polytopes can be executed in the language of decorated Coxeter diagrams: if by any series of applications of $(\star)$ to an initial decoration it is possible to obtain two non-isomorphic faces, then the corresponding polytope cannot be regular.

As an example, if $D$ is a connected Coxeter diagram and $f$ is a $\{0,1\}$-decoration of it yielding a \emph{regular} polytope $P$, we may restrict the size of the set $f\inv(1)$.
In particular, if the dimension of $P$ is at least 3 (so $D$ has at least 3 vertices), then $|f\inv(1)| = 1$.

\begin{lemma}
\label{lem:oneuncrossed}
If $f$ is a $\{0,1\}$-decoration of a connected Coxeter diagram $D$, and the associated polytope $P$ is uniform of dimension at least 3, then there is exactly one $v \in V(D)$ so that $f(v) = 1$.
\end{lemma}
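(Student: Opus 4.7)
The plan is to prove the statement by contraposition: assuming $|f^{-1}(1)| \geq 2$, I would exhibit two 2-faces of $P$ that are non-congruent polygons. Interpreting ``uniform'' in the statement as ``regular'' (consistent with the setup paragraph preceding the lemma), this contradicts the regularity of $P$, whose 2-faces must be mutually congruent by flag-transitivity. The key input is the description of 2-faces from Theorem~\ref{thm:bigthm}: each 2-face arises from a two-element set $\{u,v\} \subseteq V(D)$ of circled vertices, realized as the Wythoff polytope of $D|_{\{u,v\}}$ with restricted decoration $f|_{\{u,v\}}$. Unwinding the Wythoff construction in each case gives: a $2m$-gon when $u, v$ are joined by a label-$m$ edge and both are boxed; an $m$-gon when the edge has one box and one cross; and a square when $u, v$ are non-adjacent and both boxed (the last because non-adjacency yields orthogonal mirrors and the equidistance of the Wythoff point from the two mirrors in $f^{-1}(1)$ forces equal side lengths). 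Throughout, I would use that the finite irreducible Coxeter diagrams of rank at least 3 are trees, with labels in $\{3,4,5\}$, and at most one non-3 edge per diagram (Figure~\ref{fig:coxeterdiagram}).

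In Case A, some pair of boxes $v, w$ is adjacent in $D$ via a label-$m$ edge; applying $(\star)$ first to $v$ then to $w$ produces a $2m$-gon 2-face. Since $|V(D)| \geq 3$ and $D$ is connected, there exists a further vertex $u'$ adjacent to $v$ or $w$, say $u' \sim v$ with $u' \neq w$. If $u'$ is a cross, joined to $v$ by a label-$\ell$ edge, the 2-face from $\{v, u'\}$ is an $\ell$-gon with $\ell \leq 5$, which cannot match the $\geq 6$-sided $2m$-gon. If $u'$ is a box, then the tree structure of $D$ forces $u' \not\sim w$, so the 2-face from $\{u', w\}$ is a square, forcing $2m = 4$, contradicting $m \geq 3$.

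In Case B, no two boxes are adjacent. Two distinct boxes $v, w$ then yield a square 2-face from circled $\{v, w\}$. The box $v$ has some neighbor $u$ in $D$ by connectedness and $|V(D)| \geq 3$, necessarily a cross under the Case B assumption, joined by an edge of label $\ell$; applying $(\star)$ to $v$ (which promotes $u$ to decoration 1) and then to $u$ produces an $\ell$-gon 2-face, and regularity forces $\ell = 4$. Since $v$ was arbitrary among boxes, every edge incident to any box in $D$ must have label 4. But only $B_n$ and $F_4$ contain a label-4 edge, and it is unique; so both boxes must occupy the two endpoints of this edge, contradicting the no-adjacency assumption. In types $A_n, D_n, E_n, H_3, H_4$ no label-4 edge exists at all, so no box could have any neighbor, contradicting connectedness. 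This closes both cases, yielding $|f^{-1}(1)| \leq 1$; non-degeneracy of $P$ forces equality. The main technical obstacle is the label-based case analysis in Case B, which crucially exploits the narrow label structure of irreducible rank-$\geq 3$ Coxeter diagrams; once Theorem~\ref{thm:bigthm} gives the explicit catalog of 2-faces, the rest of the argument is largely combinatorial.
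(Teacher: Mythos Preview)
Your argument is correct and follows the same overall strategy as the paper: assume two boxes exist, produce two incongruent $2$-faces via rule $(\star)$, and derive a contradiction from the label structure of the irreducible diagrams. The case split differs slightly. The paper fixes two boxes $u,v$ and a third vertex $w$ adjacent to one of them, then branches on whether $u,w,v$ is a path and on whether $f(w)=0$ or $1$; you instead branch globally on whether any two boxes are adjacent. Your decomposition is a bit cleaner: in Case~A the $2m$-gon (with $2m\ge 6$) from the adjacent boxes immediately conflicts with any neighboring face, and in Case~B the ``every edge at a box has label~$4$'' conclusion pins the boxes to the endpoints of the unique label-$4$ edge, contradicting non-adjacency. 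The paper reaches the same contradictions but through a more local analysis around the chosen third vertex $w$, which in the $f(w)=0$ subcase requires one extra step (finding a further neighbor $t$ of the second box). Both approaches ultimately hinge on the same two facts about rank-$\ge 3$ irreducible diagrams: labels lie in $\{3,4,5\}$, and at most one edge carries a non-$3$ label.
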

\begin{proof}
Certainly there is at least one $v \in V(D)$ so that $f(v) = 1$, or else the case is degenerate.

Suppose that $u, v \in V(D)$ are distinct and have $f(u) = f(v) = 1$, and pick a third vertex $w$ adjacent to one of them.
If $u, w, v$ is a path in $D$, then $u$ and $v$ are not adjacent, since the Coxeter diagrams are acylic.
Applying $(\star)$ to $u$ and then $v$ gives a square face; so applying $(\star)$ to $u$ and then $w$ as well as $v$ and then $w$ must both also give square faces.
Since $u$ and $w$ are connected, this must mean that in the initial decoration $f(w) = 0$ and $uw$ has order 4, and likewise that $vw$ has order 4.
But no connected Coxeter diagram has two labels equal to 4, and so this cannot happen.

If $u, w,v$ is not a path in $D$, then $w$ is not adjacent to $v$.
Therefore, if $f(w) = 1$, then first applying $(\star)$ to $v$ and $w$ gives a square face, while applying it to $u$ and $w$ instead gives a $2k$-gon face where $k$ is the order of $uw$.
As they do not commute, $k > 2$ and so these faces cannot be isomorphic.

If on the other hand $f(w) = 0$, we may apply $(\star)$ to $u$ and then $w$, since $u$ and $w$ are adjacent, so after the first application the decoration will set $f'(w) = 1$.
This corresponds to a $k$-gon face, where $k$ is the order of $uw$.
By applying $(\star)$ instead to $u$ and $v$, we obtain a $2\ell$-gon face, where $\ell$ is the order of $uv$.
If $P$ is to be regular, it must be the case then that $2\ell = k$, and since the labels on Coxeter diagrams are between 2 and 5, the only option is that $\ell = 2, k = 4$.
Additionally, any connected Coxeter diagram can have at most one label equal to 4, and if it has such a label then every other label is a 2 or 3.

But the diagram is connected, and so there must be a vertex $t$ adjacent to $v$, with $vt$ of order 3.
So applying $(\star)$ to $v$ and then $t$ gives a face which is either a triangle or hexagon, according as $f(t) = 0$ or $1$.
Regardless, we saw that $P$ has a square or octogonal face, and so it cannot be regular.
\end{proof}

With a strong restriction in place in the case when $D$ is connected, it is natural to consider what occurs in the case where $D$ is disconnected.
The previous proof illustrates what might happen: other components may be able to generate square faces, removing the issue with having more than one mirror $v$ with $f(v) = 1$.
Note that Wythoff's construction distributes across disconnected $\{0,1\}$-decorated Coxeter diagrams, in the sense that the Wythoffian polytope of the union is the Cartesian product of the Wythoffian polytopes of the connected components of the diagram.
Also, to be non-degenerate it must be the case that each connected component does not have decoration identically zero.

\begin{lemma}
\label{lem:disconnected}
The only regular polytope represented by a disconnected decorated Coxeter diagram $f : V(D) \to \{0,1\}$ is a hypercube.
\end{lemma}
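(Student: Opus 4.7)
The plan is to combine the Cartesian-product decomposition of a Wythoffian polytope over a disconnected diagram with the fact that applying $(\star)$ to two vertices from different components must produce a square 2-face. Writing $D = D_1 \sqcup \cdots \sqcup D_m$ with $m \geq 2$, the Coxeter group factors as $G_1 \times \cdots \times G_m$ acting on mutually orthogonal subspaces, so the Wythoff point and its $G$-orbit split accordingly and $P = P_1 \times \cdots \times P_m$, where $P_i$ is the Wythoffian polytope associated to $(D_i, f|_{D_i})$. Nondegeneracy forces each $D_i$ to carry at least one vertex decorated $1$.

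I would then pick marked vertices $w_i \in D_i$ and $w_j \in D_j$ in distinct components. The reflections $w_i, w_j$ commute and act on orthogonal subspaces, and both mirrors sit at the common distance $d$ from the Wythoff point $x$. Applying $(\star)$ first to $w_i$ and then to $w_j$ (valid because $w_j$ retains decoration $1$ after the first step) yields, by Theorem~\ref{thm:bigthm}, a 2-face whose vertices are the orbit of $x$ under $\langle w_i, w_j \rangle \cong (\Z/2)^2$, a square of side $2d$. Regularity of $P$ then forces every 2-face of $P$, and in particular every 2-face of each factor $P_i$ (embedded in $P$ by fixing vertices in the other factors), to be a square. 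I would next observe that two applications of $(\star)$ inside a single component can produce only an $m$-gon (from an adjacent marked-unmarked pair, $m$ the edge label) or a $2m$-gon (from an adjacent marked-marked pair). Demanding every such polygon be a square yields two local conditions on $f|_{D_i}$: no two adjacent vertices of $D_i$ are both marked, and every edge of $D_i$ incident to a marked vertex has label $4$.

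I would finish by running these conditions against the classification in Fig.~\ref{fig:coxeterdiagram}. Any diagram without a label-$4$ edge admits only isolated vertices as marked ones, forcing $D_i = A_1$. In $F_4$ the unique label-$4$ edge has endpoints each with a label-$3$ neighbor, so no vertex can be marked. In $B_n$ with $n \geq 3$ the only vertex whose incident edges are all label $4$ is the outermost endpoint $v_n$, producing the $n$-cube, and in $B_2$ either endpoint may be marked, producing a square. Each $P_i$ is therefore a hypercube. Since the Wythoff construction places every marked mirror at the common distance $d$ from $x$, all edges of $P$ have length $2d$, and hence $P = P_1 \times \cdots \times P_m$ is itself a hypercube. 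The main obstacle is the case analysis in this final step, but the label-$4$ restriction disposes of most families at once.
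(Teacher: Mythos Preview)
Your proof is correct and follows essentially the same line as the paper's: both pivot on the square $2$-face obtained by applying $(\star)$ to marked vertices in distinct components, deduce that every $2$-face is a square, and then use the resulting label-$4$ constraint on edges incident to a marked vertex to force each component to be $A_1$ or $B_k$ with the $4$-end marked. The only organizational difference is that the paper packages the per-component conclusion in a short induction (first reducing to the case where the complement of a component is a single vertex), whereas you apply the constraint to every component directly and check against Fig.~\ref{fig:coxeterdiagram}; the substance is the same.
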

\begin{proof}
Let $P$ be the Wythoffian polytope corresponding to $f$.
Take $X$ to be a connected component of the diagram, say of size $k$, and denote by $P_X$ the Wythoffian polytope corresponding to the restricted decoration $f|_X$.
If $k = n - 1$, then by applying the rule $(\star)$ to a vertex in $X$ and the vertex not in $X$, it follows that $P_X$ has a square face.
By regularity, every face of $P$, and therefore of $P_X$, is square.
Let $v$ be a vertex of $X$ with $f(v) = 1$.
Applying $(\star)$ to $v$ and then any adjacent vertex must produce a square face, so every neighbour $w$ of $v$ has $f(w) = 0$, and moreover $wv$ has order 4.
The only Coxeter diagrams having label 4 are $F_4$ and $B_n$ for various $n$.
But $f|_X$ cannot be a decoration of $F_4$, because there is no vertex incident to only edges with label 4.
Therefore, $f|_X$ must be the following decoration, which corresponds to the $k$-dimensional hypercube:

\begin{figure}[h]
\centering
\begin{tikzpicture}
\draw (1,0) -- (2.5,0);
\node at (3,0) {$\cdots$};
\draw (3.5, 0) -- (5,0);
\Squarecross{1}
\Squarecross{2}
\Squarecross{4}
\Square{5}
\node at (4.5, 0.125) {\tiny 4};
\end{tikzpicture}
\end{figure}

Otherwise, $k + 1 < n$, and by deleting $n-k-1$ vertices not in $X$, we obtain a decorated Coxeter diagram $Y$ that is the disjoint union of $X$, with an additional isolated vertex $u$ satisfying $f(u) = 1$.
Denote by $P_Y$ the polytope represented by $Y$.
Since $P_Y$ is a $(k+1)$-face of the regular polytope $P$, it is regular, and since $Y$ is disconnected and has size $k+1 < n$, we see inductively that $P_Y$ is a hypercube.
Delete $u$ from $Y$ to see that $P_X$ is a facet of $P_Y$.
Since the facets of hypercubes are lower dimensional hypercubes, $P_X$ is also a hypercube.

In this fashion, we find that every connected component of the diagram corresponds to a hypercube of dimension equal to its size.
Then $P$ is the product of the respective hypercubes, which is again a hypercube.
\end{proof}

\begin{theorem}
\label{theorem:regclass}
The regular polytopes are comprised of the infinite families of simplices, hypercubes, hyperoctahedra, and regular polygons, as well as five exceptional structures: the icosahedron, dodecahedron, 120-cell, 600-cell, and 24-cell.
\end{theorem}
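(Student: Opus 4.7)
The plan is to combine Theorem~\ref{theorem:regularwythoff} with Theorem~\ref{thm:bigthm} and the two preceding lemmas to reduce the classification of regular polytopes to a finite case analysis on connected Coxeter diagrams. In dimension one the only regular polytope is the segment (a 1-simplex), and in dimension two the regular polytopes are exactly the regular polygons. For $\dim P \ge 3$, Theorem~\ref{theorem:regularwythoff} gives that $P$ is Wythoffian, so $P$ is encoded by a decorated Coxeter diagram $(D,f)$ with $f : V(D) \to \{0,1\}$ and $D$ a disjoint union of the irreducible diagrams of Fig~\ref{fig:coxeterdiagram}. If $D$ is disconnected, Lemma~\ref{lem:disconnected} says $P$ is a hypercube; if $D$ is connected, Lemma~\ref{lem:oneuncrossed} says $f$ has a unique ringed vertex $v$.

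It then remains to enumerate the pairs $(D,v)$ with $D$ connected and $v \in V(D)$ yielding a regular polytope. The main tool is the facet characterization implicit in Theorem~\ref{thm:bigthm}: the facets of $P$ correspond to subdiagrams $(D \setminus \{u\}, f|_{V(D) \setminus \{u\}})$ for $u \neq v$, subject to the non-degeneracy constraint that no component be identically zero; since $v$ is the unique ringed vertex, $u$ must be a leaf of $D$. For $P$ to be regular, all these leaf-deletion subdiagrams must be isomorphic and themselves correspond to regular polytopes, which, by induction on dimension, must be on the list being proved. This, together with the 2-face constraint that all edge labels incident to $v$ agree (so that every 2-face at the base vertex is a common $m_{vw}$-gon), severely restricts the admissible $(D,v)$.

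Running through each connected Coxeter diagram then produces the claimed list: $A_n$ with a leaf ring gives the $n$-simplex; $B_n$ with either leaf ring gives the $n$-cube or the $n$-orthoplex; $D_4, F_4, H_3, H_4$ with appropriate leaf rings give the 16-cell (a 4-orthoplex), the 24-cell, the icosahedron/dodecahedron, and the 600-cell/120-cell; and every other candidate fails the facet-isomorphism test. The main obstacle is precisely this last step: the cleanest way to rule out all rings of $E_6, E_7, E_8$ and the demihypercubes $D_n$ for $n \ge 5$ is to exhibit, for each such candidate $(D,v)$, two leaf-deletions $D \setminus \{u_1\}$ and $D \setminus \{u_2\}$ whose induced decorations are non-isomorphic, forcing $P$ to have at least two distinct facet types and hence fail to be regular.
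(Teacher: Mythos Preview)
Your strategy—reduce to a single ringed vertex on a connected diagram, then prune candidates by requiring every leaf-deletion facet to be (inductively) a regular polytope—is sound and is organized somewhat differently from the paper. The paper works \emph{forward} with the rule~$(\star)$: for each diagram it builds specific low-dimensional faces and exhibits two non-isomorphic ones directly, without an induction on dimension. Your facet-induction is arguably tidier, but it requires the classification one dimension down, whereas the paper's argument treats each diagram in isolation.

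There is, however, a real gap in your enumeration. Your two necessary conditions (all leaf-deletions give isomorphic regular facets; all edge labels at $v$ agree) do \emph{not} force $v$ to be a leaf, yet your list of survivors contains only leaf rings. Several non-leaf rings pass both tests: $A_3$ with the middle vertex ringed (the octahedron), $D_4$ with the center ringed (the 24-cell), and $B_4$ with the second vertex ringed (also the 24-cell; its two leaf-deletions are $A_3$ middle-ringed and $B_3$ end-ringed, which are non-isomorphic \emph{diagrams} but the same polytope). Moreover, for $D_n$ with $n\ge 5$ the ring at the end of the long branch also passes your facet test—both short-leaf deletions yield $A_{n-1}$ with an end ring—and this gives the $n$-orthoplex, not the demihypercube. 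So the claim that ``every other candidate fails the facet-isomorphism test'' is false. None of these omissions introduces a new polytope—they are alternate Wythoffian realizations of polytopes already on your list—so the theorem itself is unharmed, but the proof as written is incomplete: you must either run your test over \emph{all} $v\in V(D)$ and record these extra survivors, or supply a separate argument pinning $v$ to a leaf outside the sporadic cases $A_3$, $B_4$, $D_4$.
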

\begin{proof}

By Theorem~\ref{theorem:regularwythoff}, it suffices to classify the Wythoff constructions that are regular.
Moreover, by Lemma~\ref{lem:disconnected} we can without loss of generality take our Coxeter diagram to be connected.
Thus, it must be one of the diagrams from Fig.~\ref{fig:coxeterdiagram}.

All non-empty decorations of $I_2(k)$ give a regular polytope for any $k \in \Z^+$; \drawit{2}{k}{1}{0} gives the regular $k$-gon, and \drawit{2}{k}{1,2}{0} gives the regular $2k$-gon.
These are all the 2-dimensional $\{0,1\}$-Coxeter diagrams.
Henceforth, we turn our attention to connected Coxeter diagrams of size at least 3.
By Lemma~\ref{lem:oneuncrossed}, it suffices to consider the decorations $f$ having $|f\inv(1)| = 1$.

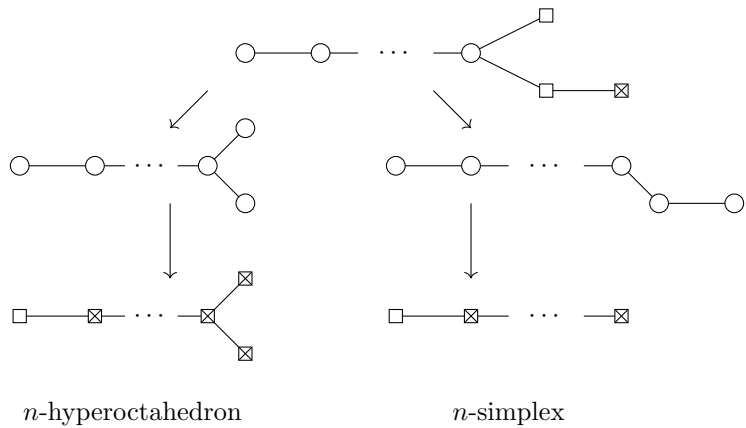
\begin{figure}[h]
\centering
\begin{tikzpicture}
\draw (0,0) -- (1.5,0);
\draw (2.5,0) -- (3,0);
\draw (3,0) -- (4,0.5);
\draw (3,0) -- (4,-0.5);
\draw (4,-0.5) -- (5,-0.5);
\node at (2,0) {$\cdots$};
\SquarE{4}{0.5}
\SquarE{4}{-0.5}
\Squarecrossed{5}{-0.5}
\Circle{0}
\Circle{1}
\Circle{3}

\draw (-3,-1.5) -- (-1.6,-1.5);
\draw (-0.9,-1.5) -- (-0.5,-1.5);
\draw (-0.5, -1.5) -- (0, -1);
\draw (-0.5, -1.5) -- (0, -2);
\node at (-1.25,-1.5) {$\cdots$};
\CirclE{-2}{-1.5}
\CirclE{-0.5}{-1.5}
\CirclE{0}{-1}
\CirclE{0}{-2}
\CirclE{-3}{-1.5}
\draw (-3,-3.5) -- (-1.6,-3.5);
\draw (-0.9,-3.5) -- (-0.5,-3.5);
\draw (-0.5, -3.5) -- (0, -3);
\draw (-0.5, -3.5) -- (0, -4);
\node at (-1.25,-3.5) {$\cdots$};
\SquarE{-3}{-3.5}
\Squarecrossed{-2}{-3.5}
\Squarecrossed{-0.5}{-3.5}
\Squarecrossed{0}{-3}
\Squarecrossed{0}{-4}
\node at (-1.5,-4.8) {$n$-hyperoctahedron};

\draw (2, -1.5) -- (3.5, -1.5);
\draw (4.5, -1.5) -- (5,-1.5) -- (5.5,-2) -- (6.5,-2);
\node at (4,-1.5) {$\cdots$};
\CirclE{2}{-1.5}
\CirclE{3}{-1.5}
\CirclE{5}{-1.5}
\CirclE{5.5}{-2}
\CirclE{6.5}{-2}

\draw (2, -3.5) -- (3.5, -3.5);
\draw (4.5, -3.5) -- (5, -3.5);
\SquarE{2}{-3.5}
\Squarecrossed{3}{-3.5}
\node at (4,-3.5) {$\cdots$};
\Squarecrossed{5}{-3.5}
\node at (3.5, -4.8) {$n$-simplex};

\draw [->] (-0.5, -0.5) -- (-1, -1);
\draw [->] (2.5, -0.5) -- (3, -1);
\draw [->] (-1, -2) -- (-1, -3);
\draw [->] (3, -2) -- (3, -3);
\end{tikzpicture}
\caption{Two distinct faces of a putative regular polytope of $E_k$ symmetry for some $k$}
\label{fig:EnBranches}
\end{figure}

Suppose that the diagram has a branch point, as in the case of $D_n (n \ge 4), E_6, E_7, E_8$.
Take the generator $v$ with $f(v) = 1$ in the diagram, and starting with $v$, repeatedly apply the rule $(\star)$ to vertices going in the direction of the branch point.
When the branch point is reached, there are two cases.
If one branch has length longer than two, then continuing to apply $(\star)$ to the first two vertices of each branch and applying $(\star)$ to the first two vertices of the longer branch gives two different decorations; one corresponding to a simplex, and the other a hyperoctahedron.
In particular, no decoration of $E_6, E_7, E_8$ can be regular---as each branch point has two branches of length at least two.
For an illustration of this process, see Fig.~\ref{fig:EnBranches}.

In the case of $D_n, n \ge 5$, since both small branches of $D_n$ have length 1, $v$ must always lie on the long branch.
In fact, $v$ must be on the end of the long branch; otherwise, once the branch point is reached, applying $(\star)$ one of the two branches and then a vertex behind the initial one gives a different decoration than applying $(\star)$ to the vertices on each branch; see, for example, Fig.~\ref{fig:D6branches}.
One may check that the resulting decoration of $D_n$ gives the $n$-dimensional hyperoctahedron.
In the case where $n = 4$, one additional decoration giving a regular polytope is possible, where $v$ is the center vertex.
This diagram corresponds to the 24-cell.

\begin{figure}[h]
\centering
\begin{tikzpicture}
\draw (0,0) -- (1,0) -- (2,0) -- (3,0) -- (4,0) -- (3,0) -- (3,1);
\node[draw,rectangle,fill=white,inner sep=1.5pt] at (0,0) {$b_3$};
\node[draw,circle,fill=white,inner sep=1.5pt] at (1,0) {$a$};
\node[draw,circle,fill=white] at (2,0) {};
\node[draw,circle,fill=white] at (3,0) {};
\node[draw,rectangle,fill=white,inner sep=1.5pt] at (4,0) {$b_1$};
\node[draw,rectangle,fill=white,inner sep=1.5pt] at (3,1) {$b_2$};
\end{tikzpicture}

\caption{In an decoration of $D_6$ having only the $a$ vertex uncrossed, one sequence of transformations leads us to this step.
In the next two steps, applying $(\star)$ to $b_2$ and $b_1$ gives a different decoration than applying $(\star)$ to $b_1$ and $b_3$}
\label{fig:D6branches}
\end{figure}
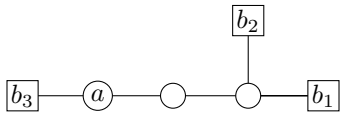

When the diagram does not have a branch point, it must be a path.
In the case of $A_n$, when $n > 3$ the vertex $v$ with $f(v) = 1$ must be on one of the ends.
Otherwise we can form two distinct cells, corresponding to \drawit{3}{}{1}{0} (a tetrahedron) and \drawit{3}{}{2}{0} (an octahedron), by applying $(\star)$ to the vertices on either side of $v$, or by applying it to two on one side of $v$.
The decoration of $A_n$ with $f(v) = 1$ where $v$ is a terminal node of the path, and $f(w) = 0$ for each other $w$ corresponds to the $n$-simplex.
The only remaining case is when $n=3$ with the decoration \drawit{3}{}{2}{0}; we already remarked that this polytope is an octahedron.

If the diagram is not $A_n$, then numbering the vertices in this path $v_1, v_2, \ldots, v_n$, we have some $j$ so that $m_{j-1,j} > 3$, where $m_{j-1,j}$ is the label on the edge (the order of the group element $v_{j-1} v_j$).
Let $i$ be such that $f(v_i) = 1$, and suppose that $v_i$ is not at one of the ends of the path.
Up to possibly relabeling the vertices of the path, we may assume that $1 < i < j$.
Since each connected Coxeter diagram has at most one label not a 2 or 3, we obtain two diagrams: one by applying $(\star)$ to vertices $i, \ldots, j$, and another by applying $(\star)$ to vertices $i-1, \ldots, j-1$.
If the polytope is to be regular, these $(j-i+1)$-faces must be the same and both themselves regular.
The case of $A_n$ handled above shows that the only way this condition can occur is if $j-i+1 = 3$, and the cells \drawit{3}{$ $}{2}{0} and \drawit{3}{$ $,m_{j-1,j}}{1}{0} are the same.
Since \drawit{3}{$ $}{2}{0} is an octahedron, we must have $m_{j-1,j} = 4$.
The only possible decoration satisfying this information is \drawit{4}{$ $,$ $,4}{2}{0}, which corresponds to the 24-cell.

Otherwise, the only decorations of $H_3, H_4, F_4, B_n$ representing regular polytopes must have the vertex $v$ with $f(v) = 1$ occuring at one of the ends of the path.
All such decorations do give regular polytopes: \drawit{3}{5}{3}{0} is the icosahedron, \drawit{3}{5}{1}{0} is the dodecahedron, \drawit{4}{5}{4}{0} is the 600-cell, \drawit{4}{5}{1}{0} is the 120-cell, and \drawit{4}{$ $,4}{1}{0} is the 24-cell.
For a decoration of $B_n$, when $v$ is on the edge with a 4, the result is the $n$-hypercube.
If it is instead on the edge marked with a 3, it is the $n$-hyperoctahedron.
\end{proof}

\bibliographystyle{ieeetr}
\bibliography{new_draft}

\begin{thebibliography}{1}

\bibitem{Champagne1995}
B.~Champagne, M.~Kjiri, J.~Patera, and R.~T. Sharp, ``Description of
  reflection-generated polytopes using decorated {Coxeter} diagrams,'' {\em
  Canadian J. Physics}, vol.~73, pp.~566--584, 1995.

\bibitem{Coxeter-polytopes}
H.~S.~M. Coxeter, {\em Regular polytopes}.
\newblock New York: Dover Publications Inc., third~ed., 1973.

\bibitem{Ziegler1995}
G.~M. Ziegler, {\em Lectures on polytopes}, vol.~152 of {\em Graduate Texts in
  Mathematics}.
\newblock Springer-Verlag New York, 1995.

\bibitem{Abstract-Regular-Polytopes}
P.~McMullen and E.~Schulte, {\em Abstract Regular Polytopes}.
\newblock Cambridge University Press, 2002.

\bibitem{Coxeter-WythoffConstruction}
H.~S.~M. Coxeter, ``Wythoff's {C}onstruction for {U}niform {P}olytopes,'' {\em
  Proc. London Math. Soc. (2)}, vol.~38, pp.~327--339, 1935.

\bibitem{Conway-Guy-Four-Dimensional-Archimedean}
J.~H. Conway and M.~J.~T. Guy, ``Four-dimensional {Archimedean} polytopes,'' in
  {\em Proceedings of the Colloquium on Convexity, Copenhagen, 1965},
  (Copenhagen, Denmark), K{\o}benhavns Universitets Matematiske Institut, 1965.

\bibitem{Coxeter-DiscreteGroupsGeneratedByReflections}
H.~S.~M. Coxeter, ``Discrete groups generated by reflections,'' {\em Annals of
  Mathematics}, vol.~35, no.~3, pp.~588--621, 1934.

\end{thebibliography}

\end{document}